\newtheorem{teo}{Theorem}[section]
\newtheorem{prop}[teo]{Proposition}
\newtheorem{lemma}[teo]{Lemma}
\newtheorem{corol}[teo]{Corollary}
\newtheorem{oss}[teo]{Remark}
\newtheorem{conj}[teo]{Conjecture}
\newtheorem*{teoS}{Iterative Selection Principle}
\newenvironment{osser}{\begin{oss}\rm }{\end{oss}}
\newcommand{\compact}{\subset\subset}    % in amssymb package
\newcommand{\de}{\partial}
\newcommand{\eps}{\varepsilon}
\newcommand{\e}{\varepsilon}
\newcommand{\optcen}{{\mathcal Z}}
\newcommand{\mdiv}{\mathop{\mathrm{div}}}
\newcommand{\graph}{\mathop{\mathrm{gr}}}
\newcommand{\lip}{\mathop{\mathrm{Lip}}}
\newcommand{\esssup}{\mathop{\mathrm{ess~sup}}}
\newcommand{\difsim}{\bigtriangleup}
\newcommand{\carat}[1]{\chi_{#1}}
\newcommand{\Per}{{P}}
\newcommand{\R}{{\mathbb R}}
\newcommand{\N}{{\mathbb N}}
\newcommand{\cF}{{\mathcal F}}
\newcommand{\cP}{{\mathcal P}}
\newcommand{\Se}{{\mathcal S}}
\newcommand{\Hau}{{\mathcal H}}
\newcommand{\asym}{{\alpha}}
\newcommand{\defP}{{\delta P}}
\newcommand{\pillow}{{\mathcal P}}
\newcommand{\QM}{{\mathcal{QM}}}
\newcommand{\Qm}{{Q^{(m)}}}
\newcommand{\Qk}{{Q^{(k)}}}
\newcommand{\Qmmu}{{Q^{(m-1)}}}
\newcommand{\Qmj}{{Q^{(m)}_{j}}}
\newcommand{\amj}{\alpha^{(m)}_j}
\title{Best constants for the isoperimetric inequality in quantitative form}
\author[M.~Cicalese]{Marco Cicalese}
\address{Dipartimento di Matematica e Applicazioni ``R. Caccioppoli'', Universit{\`a} degli Studi di Napoli \mbox{``Federico II''}, Via Cintia, 
Monte S. Angelo, I-80126 Napoli, Italy and Institut f\"ur Angewandte Mathematik,
Universit\"at Bonn, Endenicher Allee 60, 53115 Bonn, Germany}
\email{cicalese@unina.it}
\author[G.P.~Leonardi]{Gian Paolo Leonardi}
\address{Dipartimento di Matematica Pura e Applicata ``G. Vitali'', Universit{\`a} degli Studi di Modena e Reggio Emilia, Via Campi 213/b, I-41100
    Modena, Italy}
\email{gianpaolo.leonardi@unimore.it}
\keywords{Best constants, isoperimetric inequality, quasiminimizers of the perimeter}
\subjclass[2000]{52A40 (28A75, 49J45)}
\begin{document}
\begin{abstract}
We prove existence and regularity of minimizers for a class of
functionals defined on Borel sets in $\R^n$. Combining these results with a
refinement of the selection principle introduced in \cite{CicLeo10},
we describe a method suitable for the determination of the best
constants in the quantitative isoperimetric inequality with higher order
terms. Then, applying Bonnesen's annular symmetrization in a very
elementary way, we
show that, for $n=2$, the above-mentioned constants 
can be explicitly computed through a one-parameter family of convex sets
known as \textit{ovals}. This proves a further extension of a
conjecture posed by Hall in \cite{Hall92}. 
\end{abstract}

\maketitle

\tableofcontents

\section{Introduction}
Given $n\geq 2$, let $\Se^n$ be the collection of all Borel sets $E\subset\R^n$ with positive and finite Lebesgue measure $|E|$. Denoting by $B_E$ the open ball centered at $0$ with the same measure as $E$ and by $P(E)$ the perimeter of $E$ in the sense of De Giorgi, the {\it isoperimetric deficit} and the {\it Fraenkel asymmetry index} of $E\in\Se^n$ respectively read as 
$$
\defP(E)=\frac{P(E)-P(B_E)}{P(B_E)}
$$ 
and  
\begin{equation}\label{intro:asym}
\asym(E) = \inf\left\{ \frac{|E\difsim (x+B_E)|}{|B_E|},\ x\in\R^n\right\},
\end{equation}
where, as usual, $V\difsim W$ denotes the symmetric difference of the two sets $V$
and $W$.\\
The {\it sharp quantitative isoperimetric inequality} can be stated as follows: there exists a constant $C=C(n)>0$ such that 
\begin{equation}\label{intro:QII}
\defP(E)\geq C\asym(E)^2.
\end{equation}
Since the first proof of the sharp quantitative isoperimetric inequality by Fusco, Maggi and Pratelli in \cite{FusMagPra08} (see also \cite{FigMagPra10} and \cite{CicLeo10} for different proofs), a great effort has been done in order
to prove quantitative versions of several analytic-geometric inequalities (see for instance \cite{FusMagPra07}, \cite{FusMagPra09}, 
\cite{CiaFusMagPra09}, \cite{CiaFusMagPra10}, \cite{FusMagPra10}, \cite{FusMilMor10} and also \cite{Maggi08} for a survey on this argument). However, some relevant issues - such as the determination of the {\it best constant} in \eqref{intro:QII}, that is of 
\begin{eqnarray}\label{intro:bc}
C_{best}:=\max\{C>0:\, \defP(E)\geq C\asym(E)^2,\ \forall\, E\in\Se^n\},
\end{eqnarray}
the regularity of the optimal set $E_{best}$, that is of the
set such that $C_{best} = \frac{\defP(E_{best})}{\asym(E_{best})^2}$, as well as the
shape of such a set - have not yet been considered in their full generality. They seem to be challenging problems 
and only few results are known. This is basically due to the presence
of the Fraenkel asymmetry index which makes \eqref{intro:bc} a
non-local problem. As a consequence, \eqref{intro:bc} is difficult to
be tackled via standard arguments of Calculus of
Variations and shape optimization. Only in dimension $n=2$, but within
the class of convex sets, the minimizers of the isoperimetric deficit
(i.e., of the perimeter) at a fixed asymmetry index are explicitly
known. Indeed, in $1992$ Campi proved (\cite{Campi92}, Theorem $4$)
the following, equivalent statement that, among all
convex sets $E\in\Se^2$ with fixed area and perimeter $P(E)=\sigma$,
there exists a unique set $E_{\sigma}$ that maximizes the Fraenkel
asymmetry. Such a result obviously entails existence and uniqueness in
\eqref{intro:bc} restricted to convex sets. It moreover implies that the optimal convex set $E_{conv}$
agrees with $E_\sigma$ for a suitable $\sigma$. By exploiting a
symmetrization technique due to Bonnesen (\cite{Bonnesen29}), and
also known as {\it annular symmetrization}, Campi completely
characterized the set $E_\sigma$ and found an explicit  threshold
$\sigma_0$ such that, depending on whether $\sigma$ is above or below
$\sigma_0$, $E_\sigma$ is either what he called an \textit{oval}, or a
\textit{biscuit}. Here, following Campi's definition,
and assuming 
without loss of generality that the Fraenkel asymmetry of $E$ is
realized at $x=0$ (that is, $B_E$ is an \textit{optimal ball} for $E$
in the sense that $\asym(E) = \frac{|E\difsim B_E|}{|B_E|}$) we call
oval a set whose boundary is composed by
two pairs of equal and opposite circular arcs, with endpoints on
$\partial B_E$ and with common tangent lines at each point, while we
call a biscuit a set which is obtained by capping a rectangle with two
half disks (see Figure \ref{fig:ovalbisc}). 
\begin{figure}[ht]
  \centering
  \includegraphics[scale=.9]{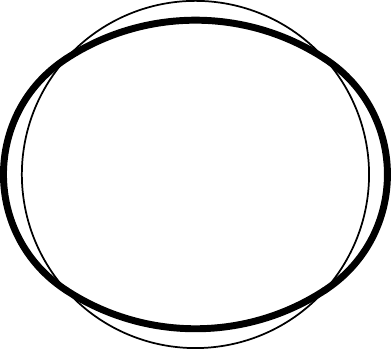}\qquad
  \includegraphics[scale=.9]{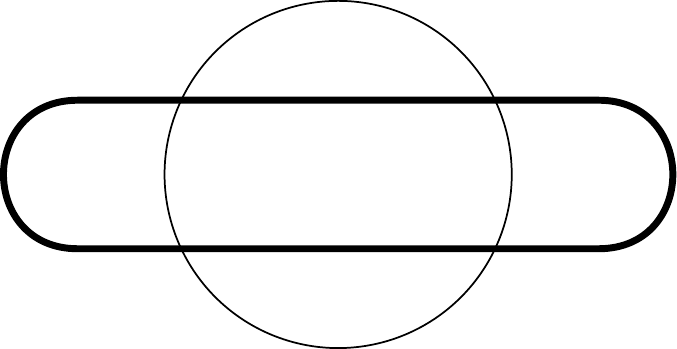}
  \caption{An oval and a biscuit, together with their optimal balls}
  \label{fig:ovalbisc}
\end{figure}
In the recent paper \cite{AlvFerNit11}, 
the authors, besides proving Campi's result in a slightly different way, 
optimize the quotient 
$\frac{\defP(E_\sigma)}{\asym(E_\sigma)^2}$ to find that
$C_{conv} = \min\limits_\sigma
\frac{\defP(E_\sigma)}{\asym(E_\sigma)^2} \simeq 0.405585$ and that
$E_{conv}$ is a biscuit. However, it is 
worth noting that, in dimension $n=2$, the problem \eqref{intro:bc} is
not solved by a convex set. An example of a non-convex
set $E_{nc}$ for which it holds  
\begin{eqnarray*}
\frac{\defP(E_{nc})}{\asym(E_{nc})^2}\simeq 0.39314
\end{eqnarray*}
is provided by the \textit{mask}, i.e. by
a set with two 
orthogonal axes of symmetry and with only two optimal balls, whose boundary is made by $8$ suitable circular arcs (see Figure \ref{fig:mask}). In the forthcoming paper
\cite{CicLeo11} it will be proved that such a set realizes the best
constant within a quite rich sub-class of planar
sets. Therefore, it seems reasonable to conjecture that the mask is
optimal with respect to all sets in $\R^2$.
\begin{figure}[ht]
  \centering
  \includegraphics[scale=.9]{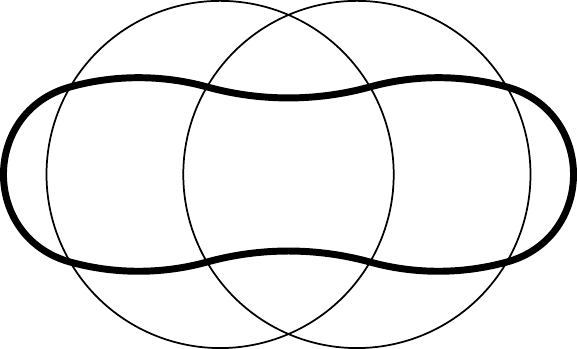}
  \caption{The mask, with its two optimal balls}
  \label{fig:mask}
\end{figure}
Up to our knowledge, and besides the two-dimensional case, problem
\eqref{intro:bc} has not been investigated. We address it here in the
first part of this paper. To this end, given $f,g:[0,2]\to \R$ two
Lipschitz-continuous functions with $g(t)$ nonnegative and zero if and only if $t=0$, for all $E\in \Se^n$ we define the functional
\[
\cF_{f,g}(E) = 
\begin{cases}\displaystyle
\frac{\defP(E) + f(\asym(E))}{g(\asym(E))}&\text{ if $\asym(E)>0$}\cr
\inf\{\liminf_h \cF_{f,g}(E_h):\ \asym(E_h)>0,\ |E_h\difsim
B|\to_h 0\}&\text{ otherwise}
\end{cases}
\]
and, for all $\alpha_0>0$ we consider the minimum problem
\begin{eqnarray}\label{intro:genmin}
\min\{\cF_{f,g}(E),\ E\in\Se^n:\ \asym(E)\geq\alpha_0\}.
\end{eqnarray}
In Theorem \ref{teo:genexists} we prove that \eqref{intro:genmin} has
a solution, while in Theorem \ref{teo:lmin} and Theorem \ref{teo:reg}
we prove that the minima are actually $\Lambda$-minimizers of the
perimeter (see Section \ref{NP} for the proper definition). 
As a consequence, on recalling classical results in the
regularity theory for quasiminimizers of the perimeter (see Theorem
\ref{teo:lminreg}), these minima are of class $C^{1,\gamma}$ for all
$\gamma<1$ (and of class $C^{1,1}$ in dimension $n=2$). Note that, by
choosing $f=0$ and $g(t)=t^2$, we have that
$\cF_{f,g}(E)=\frac{\defP(E)}{\asym(E)^2}$, hence the existence and
regularity statements hold in particular for problem 
\eqref{intro:bc}. Beside its own interest, the analysis of the more
general class of functional $\cF_{f,g}$ is here a preliminary step
towards the solution of a refinement of a problem posed in
\cite{Hall92} by Hall. In that paper, Hall conjectured that the inequality
\begin{eqnarray}\label{intro:Hall}
\defP(E)\geq \frac{\pi}{8(4-\pi)} \asym(E)^2+ o(\asym(E)^2),
\end{eqnarray}
is valid for any set $E\in\Se^2$ and that $\frac{\pi}{8(4-\pi)}$ is optimal. 
This inequality has been first proved for convex sets by Hall, Hayman
and Weitsman
in \cite{HalHayWei91,HalHay93}, and then extended by the authors to
the general case in 
\cite{CicLeo10}. It is worth pointing out that \eqref{intro:Hall} is
strongly connected with (and, actually, it is an easy consequence of)
the explicit determination of the \textit{minimizers} of the perimeter
at a fixed (small) asymmetry index. By Campi's result, we know that
minimizers among convex sets with small asymmetry are necessarily
ovals. With this information in the convex, $2$-dimensional case, it
is possible to prove not only 
\eqref{intro:Hall} but also a whole family of lower bounds of the
isoperimetric deficit by some polynomial in the asymmetry, plus
higher-order terms (see Remark 2.1 in \cite{AlvFerNit11}). 

In this direction our main contribution is Corollary \ref{cor:ovalok},
where we prove that, as soon as there
exist coefficients $c_1,\dots,c_m$ such that the estimate 
\begin{eqnarray}\label{intro:taylor}
\defP(E)\geq \sum\limits_{k=1}^{m}c_k\asym(E)^{k} + o(\asym(E)^{m})
\end{eqnarray}
is valid whenever $E$ is an oval, then \eqref{intro:taylor} is
automatically valid for any set $E\in \Se^2$. In other words, in
$\R^2$ it is not 
restrictive to only consider ovals that approximate the ball, in order
to determine the coefficients $c_k$ in \eqref{intro:taylor}. With the
aim of finding the optimal coefficients $c_k$ for  
\eqref{intro:taylor} in any dimension $n$, we introduce
the following 
family of functionals: for any $E\in \Se^n$ we define
\[
Q^{(1)}(E)=
\begin{cases}\displaystyle
\frac{\defP(E)}{\asym(E)},&\text{ if $\asym(E)>0$}\cr
\inf\{\liminf_h Q^{(1)}(E_h):\ \asym(E_h)>0,\ |E_h\difsim
B|\to_h 0\}&\text{ otherwise}
\end{cases}
\]
and, for a given integer $m\geq 2$ and assuming that 
$\Qmmu(B)\in \R$, we set 
\begin{eqnarray*}
\Qm(E)=\begin{cases}\frac{\Qmmu(E)-\Qmmu(B)}{\asym(E)}&\text{ if $\asym(E)>0$}\cr
\inf\{\liminf_h Q^{(m)}(E_h):\ \asym(E_h)>0,\ |E_h\difsim
B|\to_h 0\}&\text{ otherwise.}
\end{cases}
\end{eqnarray*}
It turns out that $c_k=\Qk(B)$, so that the problem of finding the
optimal coefficients in 
\eqref{intro:taylor} is reduced to the computation of $\Qk(B)$. We
first observe that, for 
$m\geq 2$ and $\Qk(B)\in \R$ for all $k=1,\dots,m-1$, we can
equivalently write $\Qm=\cF_{f_m,g_m}$ by choosing $f_m(\asym) =
Q^{(1)}(B)\asym + \dots + \Qmmu(B)\asym^{m-1}$ and
$g_m(\asym)=\asym^m$. Then we can combine the existence and regularity
results proved for the functionals $\cF_{f,g}$ with a penalization
technique analogous to the one exploited in \cite{CicLeo10}, to derive
the following result: 
\begin{teoS}
Let $m\geq 2$ and assume that $\Qk(B) \in \R$ for all
$k=1,\dots,m-1$. Then, there exists a sequence of sets
$(E^{(m)}_j)_j\subset \Se^n$, such that 
\begin{itemize}
\item[(i)] $|E^{(m)}_j| = |B|$, $\asym(E^{(m)}_j)>0$ and $\asym(E^{(m)}_j) \to 0$ as $j\to \infty$;

\item[(ii)] $Q^{(m)}(E^{(m)}_j) \to Q^{(m)}(B)$ as $j\to \infty$;

\item[(iii)] for each $j$ there exists a function $u^{(m)}_j\in C^1(\de B)$
  such that 
\[
\de E^{(m)}_j = \{(1+u^{(m)}_j(x))x:\ x\in \de B\}
\]
and $u^{(m)}_j \to 0$ in the $C^1$-norm, as $j\to \infty$;

\item[(iv)] $\de E^{(m)}_j$ has mean curvature $H^{(m)}_j\in
  L^\infty(\de E^{(m)}_j)$ and $\|H^{(m)}_j - 1\|_{L^\infty(\de E^{(m)}_j)} \to 0$ as $j\to \infty$.
\end{itemize}
\end{teoS}
By the Iterative Selection Principle we are allowed to compute $\Qm(B) =
\lim\limits_j \Qm(E_j^{(m)})$ via sequences of sets $E_j^{(m)}$ with asymmetry index bounded away from zero, whose
boundaries $\de E_j^{(m)}$ are smoothly converging to $\de B$ and 
such that the scalar mean-curvature functions defined on $\de
E_j^{(m)}$ are uniformly converging to the
(constant) mean curvature of $\de B$. In dimension $n=2$ we can more
precisely show that, for $j$ large enough, $E_j^{(m)}$ belongs to a
very restricted class of sets, with boundary made by arcs of circle,
and whose precise 
description is given in Section \ref{2D} (see also Figure
\ref{fig:Ej2}). Thanks to 
the minimality property of $E_j^{(m)}$, and using an elementary,
convexity-preserving, Bonnesen-style annular symmetrization on that
restricted class of sets, we finally show that $E_j^{(m)}$ are
necessarily ovals converging to $B$, whence the proof of Corollary
\ref{cor:ovalok} easily follows.

\section{Notation and preliminaries}\label{NP}
Let $E\subset \R^n$ be a Borel set, with $n$-dimensional Lebesgue
measure $|E|$. Given $x\in \R^n$ and $r>0$, we denote by $B(x,r)$  
the open Euclidean ball with center $x$ and radius $r$. We also set
$B=B(0,1)$ and $\omega_n=|B|$. 
For a set $E\in\R^n$ we denote by $\carat{E}$ its characteristic function
and correspondingly define the $L^1$ (or $L^1_{\rm loc}$) convergence of
a sequence of sets $E_j$ to a limit set $E$ in terms of the $L^1$
(or $L^1_{\rm loc}$) convergence of their characteristic functions.
The \textit{perimeter} of a Borel set $E$ inside 
an open set $\Omega\subset \R^n$ is 
\[
\Per(E,\Omega) := \sup \left\{\int_E \mdiv g(x)\, dx:\ g\in
  C^1_c(\Omega;\R^n),\ |g|\leq 1\right\}. 
\]
By Gauss-Green's Theorem, this definition provides an extension of the
Euclidean, $(n-1)$-dimensional measure of a smooth (or Lipschitz)
boundary $\de E$. We will simply write $\Per(E)$ instead of
$\Per(E,\R^n)$, and we will say that $E$ is a set of finite perimeter
if $\Per(E)<\infty$. One can check that $\Per(E,\Omega)<+\infty$ if
and only if the distributional derivative
$D\carat{E}$ is a vector-valued Radon measure in $\Omega$ with finite
total variation $|D\carat{E}|(\Omega)$. By known results (see
e.g. \cite{AmbFusPal00}) one has
$D\carat{E}=\nu_E\,\Hau^{n-1}\lfloor\de^*E$ where $\Hau^{n-1}$ is the
$(n-1)$-dimensional Hausdorff measure and $\de^*E$ is the {\it reduced boundary} of $E$, i.e., the set of those points $x\in\de E$ such that the {\it generalized inner normal} $\nu_E(x)$ is defined, that is,
\begin{equation*}
\nu_E(x)=\lim\limits_{r\to 0}\frac{D\carat{E}(B(x,r))}{|D\carat{E}|(B(x,r))}\quad\text{and}\quad|\nu_E(x)|=1.
\end{equation*}

%We recall (see for instance \cite{Giusti84}) that, given $E$ of finite perimeter in $\R^n$, for all $x\in\de^*E$
%\begin{equation}\label{density}
%\lim_{r\to 0^+}\frac{|E\cap B(x,r)|}{|B(x,r)|}=\frac 12,
%\end{equation}
%and for a.e. $r\in\R$ it holds that
%\begin{eqnarray}\label{giusti}
%\Per(E, B(0,r))= \Per(E\cap B(0,r))- \Hau^{n-1}(E\cap \de B(0,r)).
%\end{eqnarray}

We say that a set $E\subset \R^n$ of locally finite perimeter is
a \textit{strong $\Lambda$-minimizer} of the perimeter (here, we adopt
the terminology used in \cite{Ambrosio97}) if there exists
$R>0$ such that, for all $x\in \R^n$ and $0<r<R$, and for any
\textit{compact variation} $F$ of $E$ in $B(x,r)$ (that is, such that $E\difsim
F\subset\subset B(x,r)$) one has 
\[
\Per(E,B(x,r)) \leq \Per(F,B(x,r)) + \Lambda |E\difsim F|.
\]
We shall equivalently write $E\in \QM(R,\Lambda)$ to underline the
dependence of the definition of strong $\Lambda$-minimality on the
parameters $R$ and $\Lambda$, as well as to stress that this is a
\textit{quasiminimality} statement about $E$. 
Strong $\Lambda$-minimizers and more generally
\textit{quasiminimizers} of the perimeter have been studied after the
seminal work \cite{DeGiorgi61} by De Giorgi on the regularity theory for minimal
surfaces. We also mention the
paper by Massari \cite{Massari74} on the regularity of boundaries with prescribed
mean curvature (i.e., of minimizers of the functional $\Per(E) +
\int_E h(x)\, dx$) and the 
clear, as well as general, analysis of the regularity of
quasiminimizers of the perimeter due to Tamanini
(\cite{Tamanini82, Tamanini84}) and the lecture notes \cite{Ambrosio97} by Ambrosio. 
It is worth mentioning that a further (and notable) extension of the
regularity theory for quasiminimizers in the context
of currents and varifolds is due to Almgren (\cite{Almgren76}).

In the following theorem we state three crucial
properties verified by \textit{uniform sequences} of 
$\Lambda$-minimizers that converge in $L^1_{loc}$ to some limit set
$F$. The proof of these properties can be derived from results contained
for instance in \cite{Tamanini84} and
\cite{Ambrosio97} (see also \cite{CicLeo10} for more details).
\begin{teo}\label{teo:lminreg}
Let $E_1,\dots,E_h,\dots$ belong to $\QM(R,\Lambda)$ for some fixed
$R,\Lambda>0$ and let $E_h$ 
converge to a Borel set $F$ in $L^1_{loc}(\R^n)$ as $h\to
\infty$. Then the following facts hold. 
\begin{itemize}
\item[(i)] $F\in \QM(R,\Lambda)$. Moreover, if $\de F$ is bounded then
  $\de E_h$ converges to $\de F$ in the Hausdorff 
  metric\footnote{A sequence of compact sets $K_h$ converges to a
    compact set $K$ in the Hausdorff metric iff the infimum of
    all $\eps>0$ such that $K\subset K_h+\eps B$ and $K_h\subset
    K+\eps B$ (i.e., the so-called Hausdorff distance between $K_h$
    and $K$)
  tends to $0$ as $h\to \infty$.}.

\item[(ii)] $\de^*F$ is a smooth, $(n-1)$-dimensional hypersurface of
  class $C^{1,\gamma}$ for all $\gamma\in (0,1)$ (and $C^{1,1}$ in
  dimension $n=2$), while the singular set $\de F \setminus \de^* F$
  has Hausdorff dimension $\leq n-8$.

\item[(iii)] If $\de F$ is smooth (i.e., if the singular set of $\de
  F$ is empty) then there exists 
  $h_0$ such that, for any $h\geq h_0$, $\de E_h$ has no singular
  points, and thus it is of class
  $C^{1,\gamma}$ for all $0<\gamma<1$ ($C^{1,1}$ if $n=2$). Moreover,
  if $\de F$ is compact then $\de E_h$ can be represented as 
  the normal graph of a smooth function $u_h$ defined on $\de F$ and
  such that $u_h\to 0$ in $C^1(\de F)$, as $h\to \infty$. 
\end{itemize}
\end{teo}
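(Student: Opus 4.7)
The plan is to derive the three statements from the now-classical theory of quasi-minimizers of the perimeter as developed in \cite{Tamanini82,Tamanini84,Ambrosio97}, reorganized so that each item is obtained from a single key tool. First I would establish the stability of the class $\QM(R,\Lambda)$ under $L^1_{\rm loc}$-convergence. Given $x\in \R^n$, $r<R$ and a compact variation $G$ of $F$ in $B(x,r)$, I would construct competitors $G_h$ of $E_h$ by gluing $G$ to $E_h$ across a Fubini-selected spherical slice just outside the support of $G\difsim F$. A routine slicing estimate then yields
\[
\limsup_h \Per(G_h,B(x,r)) \leq \Per(G,B(x,r)) \quad \text{and} \quad |E_h\difsim G_h|\to |F\difsim G|.
\]
Feeding this into the $\Lambda$-minimality inequality for $E_h$ and using lower semicontinuity of the perimeter on the left gives $F\in \QM(R,\Lambda)$. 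The Hausdorff convergence of $\de E_h$ to $\de F$ when $\de F$ is bounded then follows from the uniform lower and upper density estimates available for $\Lambda$-minimizers, which rule out the formation of phantom boundary or of vanishing points in the limit.

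For item (ii) I would invoke the regularity machinery due to De Giorgi in the minimal case and extended by Tamanini to quasi-minimizers. The central ingredient is an excess decay of the form $\excess(F,y,\tau r) \leq \tau^{2\gamma}\excess(F,y,r) + Cr$, valid whenever scale and excess are below universal thresholds, which via Campanato-type integration upgrades $\de^* F$ to $C^{1,\gamma}$ for every $\gamma<1$. The sharper $C^{1,1}$ conclusion in dimension $n=2$ reflects the one-dimensional character of $\de F$, whose tangent oscillation is controlled by a single arclength variable. The Hausdorff-dimension bound $\leq n-8$ on $\de F\setminus\de^* F$ then comes from a Federer dimension reduction applied to the blow-up cones, which for $\Lambda$-minimizers coincide with those of plain perimeter-minimizers because the $\Lambda$-term scales sub-linearly in $r$.

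For item (iii) the key observation is that the excess decay is \emph{uniform} in $h$. Since $\de F$ is smooth and compact, its spherical excess at every point is below the universal threshold on some scale $r_0>0$; by item (i) and Hausdorff convergence, for $h$ large the excess of $E_h$ at every $y\in \de E_h$ at scale $r_0$ remains below the threshold, so the $\eps$-regularity theorem applies at each such $y$ and $\de E_h$ has no singularities. The same excess bound provides a uniform $C^{1,\gamma}$ estimate that, combined with the Hausdorff convergence, allows one to write $\de E_h$ as the normal graph over $\de F$ of a function $u_h$ tending to zero uniformly; Arzel\`a--Ascoli applied to the uniformly $C^{1,\gamma}$-bounded family $(u_h)$ then upgrades this to $C^1(\de F)$-convergence.

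The main technical obstacle is the competitor construction in step (i), which requires choosing the slicing radius so that both the perimeter of the gluing annulus and the measure error vanish in the limit; once this closure statement is in place, items (ii) and (iii) essentially amount to quoting Tamanini's $\eps$-regularity theorem together with its sequential, uniform counterpart.
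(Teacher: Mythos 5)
The paper does not actually prove Theorem \ref{teo:lminreg}: it is stated as a known package of facts and the authors simply point to Tamanini (\cite{Tamanini84}), Ambrosio's lecture notes (\cite{Ambrosio97}), and their own earlier paper \cite{CicLeo10} for the details. Your sketch is a faithful reconstruction of the argument in those references: Fubini slicing to glue a competitor for $F$ into the $E_h$ and deduce closure of $\QM(R,\Lambda)$, the uniform density estimates to upgrade $L^1_{\rm loc}$-convergence to Hausdorff convergence of boundaries, excess decay plus $\eps$-regularity for the $C^{1,\gamma}$ (resp.\ $C^{1,1}$ in $n=2$) statement, Federer's dimension reduction for the bound $\dim_{\Hau}(\de F\setminus\de^*F)\le n-8$ via the observation that the $\Lambda$-term scales away under blow-up, and uniformity of the excess threshold to obtain the graphical $C^1$-convergence in item (iii). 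So you identify exactly the right toolbox, and there is no gap of ideas.

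One small technical caveat in your step (i). As written, the displayed claim $\limsup_h\Per(G_h,B(x,r))\le\Per(G,B(x,r))$ is not quite what the gluing argument gives directly, because $G_h$ coincides with $E_h$ on the annulus $B(x,r)\setminus B(x,\rho)$ and one does not a priori have $\Per(E_h,\cdot)\to\Per(F,\cdot)$ there from $L^1_{\rm loc}$-convergence alone. The standard fix, which you will find in Tamanini and in \cite{CicLeo10}, is either to subtract the annular contribution from both sides of the $\Lambda$-minimality inequality before passing to the limit (so that only lower semicontinuity on $B(x,\rho)$ is needed), or to invoke the weak-$\ast$ convergence of $|D\chi_{E_h}|$ to $|D\chi_F|$ that does hold for uniform $\Lambda$-minimizers thanks to the density estimates. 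Once this is repaired, the rest of your outline goes through; in $n=2$ one should also be a bit more explicit that $C^{1,1}$ comes from the uniform $L^\infty$ curvature bound for $\Lambda$-minimal curves, rather than the generic $C^{1,\gamma}$ argument.
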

\medskip

In what follows we will denote by $\Se^n$ the class of Borel
subsets of $\R^n$ with positive and finite Lebesgue measure. Given  
$E\in\Se^n$, we define its \textit{isoperimetric deficit} $\defP(E)$
and its \textit{Fraenkel asymmetry} $\asym(E)$ as follows:
\begin{equation}
  \label{deficit}
  \defP(E) := \frac{\Per(E) - \Per(B_E)}{\Per(B_E)}
\end{equation}
and
\begin{equation}
  \label{asym}
  \asym(E) := \inf\left\{ \frac{|E\difsim (x+B_E)|}{|B_E|},\ x\in \R^n\right\},
\end{equation}
where $B_E$ denotes the ball centered at the origin such that $|B_E|=|E|$
and $E\difsim F$ denotes the symmetric difference of the two sets $E$
and $F$. Since both $\defP(E)$ and
$\asym(E)$ are invariant under isometries and dilations, from now on
we will set $|E|=|B|$ so that $B_E=B$. By definition, the Fraenkel asymmetry $\asym(E)$ satisfies
$\asym(E)\in [0,2)$ and it is zero if and only if $E$ coincides with
$B$ in measure-theoretic sense and up to a translation. Notice that
the infimum in \eqref{asym} is actually a minimum. 

\section{A general class of functionals}
In this section we show existence and regularity properties of
minimizers for a general class of functionals defined on sets $E\in
\Se^n$. 

%We will subsequently apply these results (in particular, Theorem \ref{teo:genexists} and Theorem \ref{teo:lmin})
%to the functionals $\Qm$ and $\Qmj$, which belong to the above-mentioned class.

\medskip
Let $f,g:[0,2]\to \R$ be two 
Lipschitz-continuous functions with $g(t)$ nonnegative and zero if and only if $t=0$. We define
the functional $\cF_{f,g}:\Se^n\to[-\infty,+\infty]$ as follows: 
\begin{eqnarray}\label{callF}
\cF_{f,g}(E) = \begin{cases}
\displaystyle\frac{\defP(E) + f(\asym(E))}{g(\asym(E))}&\text{if }\asym(E)>0\\
\inf\{\liminf_h \cF_{f,g}(E_h):\ \asym(E_h)>0,\ |E_h\difsim
B|\to_h 0\}& \text{otherwise.}
\end{cases}
\end{eqnarray}
Clearly, $\cF_{f,g}(E)$ is invariant under isometries and dilations. Note that, in what follows, we will drop the subscripts $f$ and $g$ and simply write $\cF$ instead of $\cF_{f,g}$. 

Given $0<\alpha_0<1$ we define 
\[
\Se^n_{\alpha_0}:=\{E\subset \Se^n:\ |E|=|B|,
\asym(E)\geq \alpha_0\}
\]
and, restricting $\cF$ to $\Se^n_{\alpha_0}$, we state the following theorem:
\begin{teo}[Existence of minimizers]\label{teo:genexists}
There exists $\hat E \in\Se^n_{\alpha_0}$ such that $\cF(\hat E) \leq
\cF(E)$ for all $E\in \Se^n_{\alpha_0}$.
\end{teo}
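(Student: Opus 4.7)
The plan is to use the direct method of the Calculus of Variations on the class $\Se^n_{\alpha_0}$. First I check that the infimum is finite and that minimizing sequences have bounded perimeter. Since $g$ is continuous and strictly positive on $[\alpha_0,2]$, it admits a positive lower bound $c_0$ there, while $f$ is bounded on $[0,2]$; combined with $\defP\geq 0$ this gives $\cF(E)\geq -\|f\|_\infty/c_0$ for every $E\in\Se^n_{\alpha_0}$, so $\inf_{\Se^n_{\alpha_0}}\cF\in\R$. Picking a minimizing sequence $(E_h)\subset\Se^n_{\alpha_0}$ with $|E_h|=|B|$, the same estimates force $\defP(E_h)\leq C$ and hence $\Per(E_h)\leq(1+C)\Per(B)$.

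I then gain $L^1$-compactness. Translating each $E_h$ so that $0$ realizes the infimum in \eqref{asym} does not affect $\defP$ or $\asym$, and yields $|E_h\difsim B|=\asym(E_h)|B|\leq 2|B|$, so the sets are localized near $B$. The uniform perimeter bound and the standard $BV$-compactness theorem yield a (not relabelled) subsequence converging in $L^1_{loc}(\R^n)$ to some Borel set $\hat E$. To upgrade this to $L^1(\R^n)$ I rule out escape of mass at infinity by a concentration argument: applying the relative isoperimetric inequality to $E_h\setminus B(0,R)$ and exploiting the perimeter bound together with $|E_h\setminus B|\leq |B|$, one shows that for every $\e>0$ there exists $R$ such that $|E_h\setminus B(0,R)|<\e$ uniformly in $h$. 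Hence $E_h\to\hat E$ in $L^1(\R^n)$ and $|\hat E|=|B|$.

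Finally I pass to the limit. Perimeter is $L^1$-lower semicontinuous, so $\defP(\hat E)\leq\liminf_h\defP(E_h)$. The Fraenkel asymmetry is continuous along $L^1$-converging sequences of equi-measured sets: fixing $x\in\R^n$ in \eqref{asym} and letting $h\to\infty$ yields $\limsup_h\asym(E_h)\leq\asym(\hat E)$, while extracting convergent optimal translations (bounded because the mass of $E_h$ stays concentrated near the origin) gives the reverse inequality. Therefore $\asym(\hat E)=\lim_h\asym(E_h)\geq\alpha_0$, so $\hat E\in\Se^n_{\alpha_0}$ and $g(\asym(\hat E))>0$, placing $\hat E$ in the first branch of \eqref{callF}. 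Lower semicontinuity of the numerator and continuity of the denominator now give $\cF(\hat E)\leq\liminf_h\cF(E_h)=\inf_{\Se^n_{\alpha_0}}\cF$, so $\hat E$ is a minimizer.

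The main obstacle is the $L^1$-compactness step: since $\cF$ is translation invariant and the ambient space unbounded, a minimizing sequence could a priori split into several pieces drifting apart. Centering the optimal ball at the origin localizes the bulk of the mass near $B$, but the quantitative tail estimate that rules out mass escaping to infinity is the only non-routine ingredient, and it is precisely here that the perimeter bound combines with the isoperimetric inequality.
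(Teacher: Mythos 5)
There is a genuine gap in your $L^1$-compactness step, and it is precisely the heart of the problem. You assert that, after centering the optimal ball at the origin, a relative isoperimetric inequality applied to $E_h\setminus B(0,R)$ together with the perimeter bound shows that $|E_h\setminus B(0,R)|<\e$ uniformly in $h$ for $R$ large. This is false: a uniform perimeter bound does not preclude loss of mass at infinity. For example, $E_h=A\cup (D + x_h)$ with $A,D$ fixed disjoint smooth sets of positive measure, $|A|+|D|=|B|$, and $|x_h|\to\infty$, has equibounded perimeter yet $|E_h\setminus B(0,R)|\geq |D|$ for every $R$ and $h$ large. Choosing $0$ to be an optimal center for $E_h$ fixes the location of one ball's worth of overlap, but it does not confine the remaining mass; the fragment $D+x_h$ can wander freely while the asymmetry settles to a limit $\geq\alpha_0$. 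The isoperimetric inequality applied to the tail controls $|E_h\setminus B(0,R)|$ by $\Per(E_h\setminus B(0,R))^{n/(n-1)}$, but nothing forces this perimeter to be small as $R\to\infty$ uniformly in $h$.

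The paper's argument, by contrast, does \emph{not} attempt to prove tightness of the minimizing sequence: it explicitly allows the case $\beta_0\leq |E^0|<|B|$ (``loss of mass at infinity''), extracts the escaping pieces one at a time via further translated subsequences $E^1,E^2,\dots$, and uses an Almgren-type comparison (the dilation $\lambda_h E^0$ replacing a far-away fragment) to prove that each new piece carries a definite fraction of the mass of the first, whence the number of pieces is finite. A minimizer is then \emph{built} by placing the finitely many limiting pieces at mutually infinite (effectively, sufficiently large) distance, and verifying that the asymmetry and perimeter of this reassembled set match the limiting values along the sequence. So the two proofs diverge exactly where yours has the gap: you try to rule out splitting, the paper accepts splitting and handles it constructively. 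The rest of your outline — lower semicontinuity of perimeter, continuity of the asymmetry under genuine $L^1$ convergence of equi-measured sets, and passing these into $\cF$ — is fine once one actually has an $L^1$-convergent minimizing sequence, but obtaining such a sequence is the non-trivial part and cannot be dispatched by a tail estimate.
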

\begin{proof}
We first observe that the subclass $\Se^n_{\alpha_0}$ is closed with
respect to the $L^1$-convergence of sets. We now fix a minimizing sequence
$(E_h)_h\subset \Se^n_{\alpha_0}$ for $\cF$ and assume $|E_h|=|B|$. We can of course assume
that $\cF(E_h)\leq C<\infty$ for all $h$ and for some constant
$C>0$. Therefore, 
\[
\Per(E_h) \leq \Per(B) + C g(\asym(E_h)) - f(\asym(E)) \leq \Per(B) +
C\max g - \min f <+\infty
\]
for all $h$. As a consequence, by a well-known compactness result for
families of sets with equibounded perimeter (see for instance \cite{Giusti84}), $(E_h)_h$ is sequentially relatively
compact in $L^1_{loc}(\R^n)$ . Starting from $E_h$
we can construct a new sequence $\hat E_h$ with the property of being
a uniformly bounded minimizing sequence for $\cF$ 
converging to a limit set $\hat E$. To this end, we shall adapt to our
case an argument originally employed by Almgren in \cite{Almgren76}. 

First, by a standard concentration-compactness argument, one can prove
that there exists $\beta_0>0$ (depending only on the data of the
problem, and not on the sequence $(E_h)_h$) and $\{x^0_h\}_h\subset \R^n$, such that 
\[
|E_h \cap (x^0_h + B)| \geq \beta_0.
\]
Of course, we can assume that $x^0_h$ is an \textit{optimal center}
for $E_h$, that is, $\asym(E_h) = \frac{|E_h\difsim (x^0_h
  +B)|}{|B|}$. 
The functional $\cF$ is invariant with respect to translations, thus
the translated sequence $E^0_h := E_h - x^0_h$ is still minimizing
and, at the 
same time, verifies $|E^0_h \cap B| \geq \beta_0$. Up to
subsequences, we can assume that $E^0_h$ converges to $E^0$ in the
$L^1_{loc}$-topology. Now, we face two possibilities: either $|E^0| =
|B|$ and this would immediately imply that $E^0_h \to E^0$ in $L^1$
(in this case we are done) or $\beta_0\leq |E^0|<|B|$. The latter
possibility corresponds to a ``loss of
mass at infinity''. In order to deal with this case, we first study the
minimality of $E^0$ with respect to the perimeter. On exploiting the
same argument contained in the proof of Lemma 3.3(ii) in
\cite{CicLeo10}, $\Per(E^0)\leq \Per(F)$ for all measurable $F\subset
\R^n$ such that $|F| = |E^0|$ and $F\difsim E^0$ is compactly
contained in $\R^n\setminus B(0,R)$ for a sufficienty large $R$. As a consequence, by
well-known results on minimizers of the perimeter subject to a volume
constraint 
%(and in particular, by uniform density estimates at reduced
%boundary points that holds by virtue of monotonicity formula, see
%\cite{Ambrosio97})
we infer that $E^0$ is necessarily bounded. Let us set $R_0>0$ such that
$E_0\subset B(0,R_0)$. 
Being $B$ an optimal ball for $E^0_h$, and since $E^0_h$ converges to
$E^0$ in $L^1_{loc}$, we set
\[
\gamma := \frac{2|B\setminus E^0|}{|B|} = \lim_h \asym(E^0_h).
\]
Clearly, $\alpha_0\leq \gamma < 2$. In the case $\gamma< 1$, the set
$\tilde E = E^0\cup B_0$ minimizes the functional $\cF$, where $B_0$
is a ball such that $B_0\cap B(0,R_0+2)=\emptyset$ and $|E^0\cup B_0|
= |E^0|+|B_0| = |B|$. Indeed, it holds that $\asym(\tilde E) =
\gamma$ and $\Per(\tilde E) \leq \liminf_h \Per(E^0_h)$. Otherwise, in
the case 
$\gamma\geq 1$ we proceed differently. Since we are facing a loss of
mass in the limit, $|E^0_h \setminus B(0,R_0)|\to |B|-|E^0|>0$ as $h\to
\infty$. Hence we can find $\beta_1>0$ and $\{x^1_h\}_h \subset \R^n$ such that,
as before, $|E^0_h \cap (x^1_h + B)|\geq \beta_1$. Note that 
$|x^1_h|\to +\infty$ as $h\to \infty$, otherwise by compactness we
would contradict the inclusion $E^0\subset B(0,R_0)$. We may also
assume that 
\[
x^1_h \in \mathop{\arg\max}\limits_{x\in \R^n\setminus B(0,R_0+2)}|E^0_h
\cap (x + B)|.
\]
Arguing as before, we can extract a subsequence of $E_h^0$ (that we do not relabel) such that $E^1_h := E^0_h - x^1_h$ converges to $E^1$ in
$L^1_{loc}$, as $h\to \infty$. Moreover, we let $R_1>0$ be such that $E^1\subset B(0,R_1)$. Now, we show that there exists a constant
$C>1$ depending only on the data of the problem (and not on the
minimizing sequence $E_h$) such that 
\begin{equation}
  \label{equipezzi}
  \frac 1C |E^0|\leq |E^1|\leq C|E^0|.
\end{equation}
To prove \eqref{equipezzi} it is enough to show the first inequality,
i.e. $|E^0|\leq C|E^1|$ for some uniform $C>1$ (the other is implied
by the estimate $|E^0|\geq \beta_0$ shown above). Indeed, let us
assume $|E^1|\leq |E^0|$ (otherwise there is nothing to prove). We
consider the following modified sequence: 
\[
\tilde E_h = (\lambda_h E^0)\cup [E^0_h \setminus
(B(0,2R_0) \cup B(x^1_h,R_1))], 
\]
where $\lambda_h>1$ is such that $|\tilde E_h| = |E_h|=|B|$. Therefore,
$\lambda_h\to \left(1 + \frac{|E^1|}{|E^0|}\right)^{\frac 1n}$ as
$h\to \infty$, and by Bernoulli's inequality we also have 
\begin{equation}\label{lah}
1\leq \lambda_h \leq 1 + \frac{|E^1|}{n|E^0|} + \e_h,
\end{equation}
where $\e_h\to 0$ as $h\to \infty$. Since $|E^1|<|E^0|$, we can assume
without loss of generality that $1\leq \lambda_h < 2$ for all $h$. 
We now set $\alpha_h = \asym(E_h)$, $\tilde\alpha_h =
\asym(\tilde E_h)$, and $\delta_h = |\tilde\alpha_h -
\alpha_h|$. In what follows, to simplify notation and not to overburden the reader, we let 
``n.t.'' stand for $O(\e_h)+o\left(\frac{|E^1|}{|E^0|}\right)$. 
We first show that 
\begin{equation}
  \label{eq:dh}
  \delta_h \leq 2\frac{|E^1|}{|E^0|} + \text{ n.t.}
\end{equation}
Indeed, we recall that here
\begin{eqnarray}\label{teo:reg_alpha_lb}
\alpha_h = \frac{2|B\setminus E^0_h|}{|B|}\to \gamma\geq 1
\end{eqnarray}
as $h\to \infty$. Then, since $\lambda_h\geq 1$ we get
\begin{eqnarray*}
\alpha_h &=& \frac{2|B\setminus E^0|}{|B|} + \text{n.t.} 
= \frac{2\lambda_h^{-n}}{|B|}|\lambda_h B \setminus \lambda_h E^0| +
\text{n.t.}
\geq  \frac{2\lambda_h^{-n}}{|B|}|B \setminus \lambda_h E^0| +
\text{n.t.}\\ 
&\geq & \lambda_h^{-n}\tilde\alpha_h + \text{n.t.}
\end{eqnarray*}
whence 
\begin{eqnarray*}
\tilde\alpha_h &\leq & \lambda_h^n \alpha_h + \text{n.t.}
\leq  \alpha_h +\frac{|E^1|}{|E^0|} \alpha_h + \text{n.t.}\\
&\leq & \alpha_h +2\frac{|E^1|}{|E^0|} + \text{n.t.}
\end{eqnarray*}
Therefore we have shown that
\begin{equation}
  \label{stdelta1}
  \tilde\alpha_h \leq \alpha_h + 2\frac{|E^1|}{|E^0|} + \text{n.t.}
\end{equation}
We now consider the following two alternative cases.
\medskip

\noindent
\textit{Case $1$: there exists an optimal ball $\tilde B_h$ for
  $\tilde E_h$, such that $\tilde B_h\cap B(0,2R_0) = \emptyset$.} In this
case we have 
\begin{equation}\label{stdelta2}
\tilde\alpha_h\ =\ 2\frac{|\tilde B_h \setminus \tilde
  E_h|}{|B|}\ \geq\ 2\frac{|\tilde B_h\setminus E^0_h|}{|B|} 
\ \geq\ \alpha_h.
\end{equation}

\noindent
\textit{Case $2$: any optimal ball $\tilde B_h$ for
  $\tilde E_h$ verifies $\tilde B_h\cap B(0,2R_0) \neq \emptyset$.} In
this case, if we set $\tilde B_h = B(\tilde x_h,1)$ and recall that
$B$ is an optimal ball for $E_h$, for all $h$, we obtain 
\begin{eqnarray*}
\tilde \alpha_h &=& 2\frac{|\tilde B_h\setminus \lambda_h E^0|}{|B|} +
\text{ n.t.}
= 2\frac{\lambda_h^n}{|B|} \left|\frac{\tilde B_h}{\lambda_h}\setminus
  E^0 \right| + \text{ n.t.}\\ 
&\geq & 2\frac{\lambda_h^n}{|B|} \left[|B(\tilde
  x_h/\lambda_h,1)\setminus E^0| - |B|(1-\lambda_h^{-n})\right]+
\text{ n.t.}\\ 
&\geq & \lambda_h^n \alpha_h - 2(\lambda_h^n - 1) + \text{ n.t.}
= \alpha_h + (\alpha_h -2)(\lambda_h^n -1) + \text{ n.t.}\\
&\geq & \alpha_h - 2\frac{|E^1|}{|E^0|} + \text{ n.t.}
\end{eqnarray*}
and this proves the inequality 
\begin{equation}
  \label{stdelta3}
\tilde \alpha_h \geq \alpha_h - 2\frac{|E^1|}{|E^0|} + \text{ n.t.}
\end{equation}
which combined with \eqref{stdelta2} and \eqref{stdelta1} gives
\eqref{eq:dh}. 

\noindent
Assume by contradiction that
the ratio $\frac{|E^1|}{|E^0|}$ is not bounded below by a 
positive constant that depends only on the data of the problem. Then
by \eqref{eq:dh} and \eqref{teo:reg_alpha_lb} we have that $\tilde\alpha_h \geq \alpha_0$. Therefore, $\tilde E_h$ belongs
to the class $\Se^n_{\alpha_0}$, so that we are allowed to
compare $\cF(\tilde E_h)$ with $\cF(E^0_h)$. Thanks to the hypothesis
on $g$, we also have 
that, for $h$ large enough, $g(\alpha_h) -\lip(g)\delta_h>0$. Thus by
\eqref{eq:dh}, up to a suitable choice of the radii $R_0$ and $R_1$ (for details on this point we refer to
the proof of Lemma $3.3$ $(ii)$ in \cite{CicLeo10}), we obtain
\begin{eqnarray*}
\cF(\tilde E_h) &=& \frac{\defP(\tilde E_h) +
  f(\tilde\alpha_h)}{g(\tilde\alpha_h)}\\  
&\leq & \frac{\Per(\lambda_h E^0) + \Per(\tilde E_h\setminus
  B(0,2R_0)) -\Per(B) + \Per(B)[f(\alpha_h) +
  \lip(f)\delta_h]}{\Per(B) [g(\alpha_h) - \lip(g)\delta_h]}\\
&\leq & \frac{(\lambda_h^{n-1}-1)\Per(E^0) + \Per(E^0_h) -
  \Per(B) - \Per(E^0_h\cap B(x^1_h,R_1)) + \Per(B)[f(\alpha_h) +
  \lip(f)\delta_h]}{\Per(B)[g(\alpha_h) - \lip(g)\delta_h]}+o(1).
\end{eqnarray*}
Since
\[
\liminf_h\Per(E^0_h\cap B(x^1_h,R_1)) \geq \Per(E^1),
\]
by exploiting the isoperimetric inequality we get
\begin{equation}\label{lestimate}
\cF(\tilde E_h) \leq \frac{(\lambda_h^{n-1}-1)\Per(E^0) +
  \Per(E^0_h)-\Per(B) - n\omega_n^{\frac 1n}|E^1|^{\frac{n-1}{n}} +
\Per(B)[f(\alpha_h) + 
  \lip(f)\delta_h]}{\Per(B)[g(\alpha_h) - \lip(g)\delta_h]}+o(1),
\end{equation}
Then, combining
\eqref{lah} and \eqref{lestimate}, and after 
some straightforward computations, we get 
\begin{equation*}
\cF(\tilde E_h) \leq \cF(E^0_h) - 
  \frac{\omega_n^{\frac 1n-1}}{\max g}|E^1|^{\frac{n-1}{n}}+o(1),
\end{equation*}
which contradicts the optimality of the sequence
$E^0_h$, thus proving \eqref{equipezzi}.

Since the volume of $E_h$ equals $|B|$, an immediate consequence of
\eqref{equipezzi} is that there exists a finite family
$\{E^0,E^1,\dots,E^N\}$ of sets of finite perimeter, obtained
as $L^1_{loc}$-limits of suitably translated subsequences of the
initial minimizing sequence $E_h$, which satisfy
\begin{itemize}
\item[(a)] $|E^0|+\dots +|E^N| = |B|$;

\item[(b)] $\liminf_h \Per(E_h) \geq \sum_{i=0}^N \Per(E^i)$;

\item[(c)] $\asym(E_h) \to \min\limits_{i=0\dots N} \min\limits_{x\in \R^n}
  \frac{2|(x+B)\setminus E^i|}{|B|}$ as $h\to \infty$.
\end{itemize}
The proof of (a) and (b) is routine. On the other hand, (c) follows
from the fact that given $i,j\in \{1,\dots,N\}$ with $i\neq j$, the two
sets $E^i$ and $E^j$ are respectively obtained as limits in
$L^1_{loc}$ of a subsequence of $(E_h)_h$, up to suitable translation vectors
$x^i_h$ and $x^j_h$ that satisfy $\lim_h |x^i_h - x^j_h|=+\infty$. \\
We can now construct a minimizer of $\cF$ by simply setting
\[
\hat E = E^0 \cup (v + E^1) \cup (2v + E^2)\cup \dots \cup (Nv + E^n),
\]
where $v\in \R^n$ is any vector such that $E^i \subset B(0,|v|/2-2)$ for all
$i=0,\dots,N$. In this way, we guarantee by (a) above that $|\hat E| =
|B|$ and, by (c), that
\begin{equation}\label{asymcont}
\asym(\hat E) = \min\limits_{i=0\dots N} \min\limits_{x\in \R^n}
  \frac{|(x+B)\setminus E^i|}{|B|} = \lim_h \asym(E_h).
\end{equation}
Finally, by (b) and \eqref{asymcont} we conclude that $\hat E$ is a
minimizer of $\cF$. 
\end{proof}
\medskip

In the following Lemma we recall an elementary but useful estimate
of a difference of asymmetries in terms of the volume of the symmetric
difference of the corresponding sets. 
\begin{lemma}\label{lemma:stimasym}
Let $E\in\Se^n$ with $|E|=|B|=\omega_n$. For all $x\in \R^n$ and 
for any $F\in\Se^n$ with $E\difsim F
\compact B(x,\frac 12)$, it holds that
$|\asym(E) - \asym(F)| \leq \frac{2^{n+2}}{(2^n-1)\omega_n} |E\difsim F|$. 
\end{lemma}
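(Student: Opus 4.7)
The plan is to establish the lemma as a controlled triangle inequality for the symmetric difference ``pseudometric'', with the constant accounting for the fact that the optimal balls for $E$ and $F$ need not coincide because $E$ and $F$ may have slightly different volumes.

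First I would observe that the assumption $E\difsim F\compact B(x,\tfrac12)$ gives the volume estimate $||E|-|F||\leq |E\difsim F|\leq |B(x,\tfrac12)| = \omega_n/2^n$. Since $|E|=\omega_n$, the ball $B_F$ equi-measurable with $F$ is $B_F=rB$ with $r=(|F|/\omega_n)^{1/n}$, and the lower bound $|F|\geq \omega_n - \omega_n/2^n$ yields $r^n\geq (2^n-1)/2^n$, hence $r^{-n}\leq 2^n/(2^n-1)$. Also, $|B\difsim rB|=\omega_n|1-r^n|\leq |E\difsim F|$.

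Next I would bound $\asym(F)-\asym(E)$. Let $y\in\R^n$ be an optimal center for $E$, so that $\asym(E)=|E\difsim(y+B)|/\omega_n$. Testing the infimum defining $\asym(F)$ with the center $y$, and using the triangle inequality for the symmetric difference together with $|(y+B)\difsim(y+rB)|=|B\difsim rB|$, I would obtain
\[
\asym(F)\ \leq\ \frac{|F\difsim(y+rB)|}{r^n\omega_n}\ \leq\ \frac{|E\difsim(y+B)|+|E\difsim F|+|B\difsim rB|}{r^n\omega_n}\ \leq\ \frac{\asym(E)}{r^n}+\frac{2|E\difsim F|}{r^n\omega_n}.
\]
Rearranging and using $\asym(E)\leq 2$, $1-r^n\leq |E\difsim F|/\omega_n$, and $r^{-n}\leq 2^n/(2^n-1)$, this gives
\[
\asym(F)-\asym(E)\ \leq\ \asym(E)\,\tfrac{1-r^n}{r^n}+\tfrac{2|E\difsim F|}{r^n\omega_n}\ \leq\ \tfrac{4|E\difsim F|}{r^n\omega_n}\ \leq\ \tfrac{2^{n+2}}{(2^n-1)\omega_n}\,|E\difsim F|.
\]

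For the reverse inequality, I would choose $z$ an optimal center for $F$ (so $\asym(F)=|F\difsim(z+rB)|/(r^n\omega_n)$) and test the infimum for $\asym(E)$ with the center $z$. The analogous triangle inequality yields $|E\difsim(z+B)|\leq 2|E\difsim F|+r^n\omega_n\asym(F)$, which, combined with $|r^n-1|\leq |E\difsim F|/\omega_n$ and $\asym(F)\leq 2$, gives $\asym(E)-\asym(F)\leq 4|E\difsim F|/\omega_n$, comfortably within the claimed bound. The main (and only real) technical point is keeping track of the rescaling factor $r$, which is what forces the appearance of the $(2^n-1)^{-1}$ in the constant; once one reduces everything to the triangle inequality for the symmetric difference and the volume estimate on the small ball, the proof is elementary.
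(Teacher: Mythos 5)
The paper states Lemma~\ref{lemma:stimasym} without proof, describing it only as an ``elementary but useful estimate'' to be recalled, so there is no in-text argument to compare against. Your proof is correct and self-contained: the key ideas are (1) the volume estimate $\big||E|-|F|\big|\leq |E\difsim F|\leq \omega_n/2^n$, which yields both $r^{-n}\leq 2^n/(2^n-1)$ and $|B\difsim rB|=\omega_n|1-r^n|\leq|E\difsim F|$, and (2) testing the infimum in the definition of $\asym$ at the optimal center of the other set, then applying the triangle inequality for the symmetric-difference pseudometric. The bookkeeping is accurate: in the direction $\asym(F)-\asym(E)$ you get at most $4|E\difsim F|/(r^n\omega_n)\leq \frac{2^{n+2}}{(2^n-1)\omega_n}|E\difsim F|$, and the reverse direction gives the even smaller bound $4|E\difsim F|/\omega_n$. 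Both cases $r\lessgtr 1$ are handled (when the relevant factor $(1-r^n)$ or $(r^n-1)$ is negative the corresponding term only helps), and the constant $\frac{2^{n+2}}{(2^n-1)\omega_n}$ is exactly reproduced, so the argument is complete.
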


The next is a crucial theorem in our analysis asserting the $\Lambda$-minimality of the minimizers 
of the functional in \eqref{callF}.

\begin{teo}[$\Lambda$-minimality]\label{teo:lmin}
Let $\cF$ be the functional defined in \eqref{callF}. Then, there exists $\Lambda>0$ such that any
minimizer $E\in \Se^n$ of $\cF$, with $|E| = |B|$, is a
$\Lambda$-minimizer of the perimeter. 
\end{teo}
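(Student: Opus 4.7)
Let $E$ be a minimizer of $\cF$ on $\Se^n_{\alpha_0}$ with $|E|=|B|$ and $\asym(E)\ge\alpha_0>0$, and fix a compact variation $F$ of $E$ in a ball $B(x,r)$ with $r<R\le 1/2$, the threshold $R$ to be chosen below. The plan is to test the minimality of $E$ against a volume-corrected competitor $F':=\lambda F$ with $\lambda=(|B|/|F|)^{1/n}$, so that $|F'|=|B|$. From $\bigl||F|-|B|\bigr|\le |E\difsim F|$ one has $\lambda-1=O(|E\difsim F|)$, hence $\Per(F')=\lambda^{n-1}\Per(F)=\Per(F)+O(|E\difsim F|)$ and $\defP(F')=\defP(F)+O(|E\difsim F|)$. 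Without loss of generality we may assume $\Per(F)\le\Per(E)+1$, since otherwise the desired bound $\Per(E,B(x,r))\le\Per(F,B(x,r))+\Lambda|E\difsim F|$ is trivial.

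Next I would control $|\asym(F')-\asym(E)|$. The change of variables $z=\lambda u$ in $|\lambda F\difsim(y+B)|$ gives $|F'\difsim(y+B)|=\lambda^n|F\difsim(y/\lambda+B(0,1/\lambda))|$, and comparing $B(0,1/\lambda)$ with $B$ leads to $|\asym(F')-\asym(F)|\le C|\lambda-1|\le C|E\difsim F|$. Combined with Lemma~\ref{lemma:stimasym}, applicable since $r\le 1/2$, which yields $|\asym(F)-\asym(E)|\le C|E\difsim F|$, this gives
\[
|\asym(F')-\asym(E)|\le C_1|E\difsim F|.
\]
By choosing $R$ small enough we can arrange $\asym(F')\ge\alpha_0$, so that $F'$ is admissible for $\cF$, and by continuity and positivity of $g$ on $(0,2]$ we also obtain the uniform bound $g(\asym(F'))\ge g_0>0$ depending only on $\alpha_0$ and $g$. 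Lipschitzianity of $f$ and $g$ further yields $|f(\asym(F'))-f(\asym(E))|+|g(\asym(F'))-g(\asym(E))|\le C_2|E\difsim F|$. (In the boundary case $\asym(E)=\alpha_0$, the condition $\asym(F')\ge\alpha_0$ may fail; this is handled by appending to $F'$ a tiny set far from $B(x,r)$ that raises the asymmetry back into the admissible range at perimeter cost $O(|E\difsim F|)$.)

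The minimality $\cF(E)\le\cF(F')$ now rewrites as
\[
g(\asym(F'))\defP(E)-g(\asym(E))\defP(F')\le g(\asym(E))f(\asym(F'))-g(\asym(F'))f(\asym(E)),
\]
whose right-hand side is $O(|E\difsim F|)$ by the Lipschitz estimates above. Decomposing the left-hand side as
\[
g(\asym(F'))\bigl(\defP(E)-\defP(F')\bigr)+\bigl(g(\asym(F'))-g(\asym(E))\bigr)\defP(F'),
\]
and invoking the lower bound $g(\asym(F'))\ge g_0$ together with the a priori bound on $\defP(F')$ coming from $\Per(F)\le\Per(E)+1$, one gets $\defP(E)-\defP(F')\le C_3|E\difsim F|$ and hence $\Per(E)-\Per(F)\le\Lambda|E\difsim F|$ for a constant $\Lambda$ depending only on $n$, $\alpha_0$, $\lip(f)$, $\lip(g)$, $g_0$ and $\Per(B)$. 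Since $E\difsim F\compact B(x,r)$ implies that the exterior parts of $\Per(E)$ and $\Per(F)$ coincide, this yields the localised estimate $\Per(E,B(x,r))\le\Per(F,B(x,r))+\Lambda|E\difsim F|$, proving $E\in\QM(R,\Lambda)$. The principal technical obstacle is precisely the simultaneous control of the volume-preserving rescaling on perimeter and asymmetry, together with ensuring that $F'$ (or a small adjustment of it) stays admissible with $g(\asym(F'))$ bounded uniformly away from zero, all while keeping every error term linear in $|E\difsim F|$.
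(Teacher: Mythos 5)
Your proof follows essentially the same route as the paper's: compare $\cF(E)\leq\cF(F)$, exploit Lipschitz continuity of $f$ and $g$ together with Lemma~\ref{lemma:stimasym}, and translate the resulting bound on $\defP(E)-\defP(F)$ into a linear-in-volume bound on $\Per(E)-\Per(F)$. Two remarks are in order, however.

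First, the dilation $F'=\lambda F$ is superfluous: since $\defP$ and $\asym$ are scale-invariant, the paper compares $\cF(E)$ with $\cF(F)$ directly (without rescaling) and absorbs the volume mismatch through the identity $\defP(E)-\defP(F)=\bigl(\Per(E)-\tfrac{\Per(B)}{\Per(B_F)}\Per(F)\bigr)/\Per(B)$ followed by a Bernoulli estimate of $\Per(B)/\Per(B_F)$. Your rescaled competitor is correct but adds an extra layer of bookkeeping that the paper's identity removes at once.

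Second, and more importantly, your parenthetical fix for the boundary case $\asym(E)=\alpha_0$ does not work as claimed. Appending a detached set of volume $\delta$ far away raises the asymmetry by $\Theta(\delta)$, but its perimeter cost is at least $n\omega_n^{1/n}\delta^{(n-1)/n}$ by the isoperimetric inequality, which is $\Theta(|E\difsim F|^{(n-1)/n})$ and therefore \emph{not} $O(|E\difsim F|)$ for small $|E\difsim F|$. This would destroy the linearity required in the $\Lambda$-minimality estimate, so the boundary case is genuinely not handled. That said, the paper never needs this fix: the theorem is applied (in Proposition~5.3 and the Corollary after Theorem~\ref{teo:reg}) to sets that minimize $\cF$ over all of $\Se^n$, for which $\cF(E)\leq\cF(F)$ holds with no asymmetry constraint on $F$. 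The hypothesis ``$E$ is a minimizer of $\cF$'' should be read in that unconstrained sense; once read that way, neither the admissibility of $F'$ nor the lower bound $g(\asym(F'))\geq g_0$ requires the asymmetry of the competitor to stay above $\alpha_0$ (the paper in fact divides by $g(\asym(E))$, which is automatically positive since $\asym(E)>0$). If you insist on the constrained reading, the boundary case remains an unresolved gap in your argument.
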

\begin{proof}
Of course, if $\asym(E) = 0$ there is nothing to prove, since $E$ is a
ball (and thus a well-known $\Lambda$-minimizer of the perimeter) up
to null sets. We now assume $\asym(E)>0$ and fix $x\in \R^n$ and a
compact variation $F$ of $E$ in $B(x,\frac 12)$. It is not restrictive
to assume that $\Per(F)\leq \Per(E)$ and that $\asym(F)>0$. Since $\cF(E) \leq \cF(F)$, we have
\begin{equation}
  \label{eq:1}
(\defP(E) + f(\asym(E)))g(\asym(F)) \leq g(\asym(E)) (\defP(F) + f(\asym(F))).
\end{equation}
Then, combining Lemma \ref{lemma:stimasym} and the Lipschitz
continuity of $g$, we have
\begin{equation}
  \label{eq:5}
|g(\asym(E)) - g(\asym(F))| \leq \lip(g)|\asym(E) - \asym(F)| \leq
C_{n,g}|E\difsim F|,
\end{equation}
with $C_{n,g} = \lip(g)\frac{2^{n+2}}{(2^n-1)\omega_n}$. We now set 
\[
\chi = \text{ sign of }\ (\defP(E) + f(\asym(E)))
= \text{ sign of }\ \cF(E)
\]
and observe that by \eqref{eq:5} 
\begin{equation}
  \label{eq:2}
(\defP(E) + f(\asym(E)))(g(\asym(E)) - \chi C_{n,g}|E\difsim F|) \leq
(\defP(E) + f(\asym(E)))g(\asym(F)),
\end{equation}
thus plugging \eqref{eq:2} into \eqref{eq:1} and dividing by
$g(\asym(E))$ we get
\begin{equation}
  \label{eq:3}
\defP(E) - \defP(F) \leq f(\asym(F)) - f(\asym(E)) +
C_{n,g}|\cF(E)|\cdot |E\difsim F|.
\end{equation}
On recalling that $f$ is Lipschitz, we have \eqref{eq:5} with $f$
replacing $g$, and therefore we obtain
\begin{equation}
  \label{eq:4}
\defP(E) - \defP(F) \leq (C_{n,f} + C_{n,g}|\cF(E)|)\cdot |E\difsim F|.
\end{equation}
Then, we note that 
\begin{equation}
  \label{eq:6}
\defP(E) - \defP(F) = \frac{\Per(E) -
  \frac{\Per(B)}{\Per(B_F)}\Per(F)}{\Per(B)}   
\end{equation}
and that
\begin{eqnarray}\label{eq:7}\nonumber
\frac{\Per(B)}{\Per(B_F)} &=&
\left(\frac{|E|}{|F|}\right)^{\frac{n-1}{n}}= \left(1+ 
\frac{|E|-|F|}{|F|}\right)^{\frac{n-1}{n}}\\  
&\leq & \left(1+ \frac{|E\difsim F|}{|F|}\right)^{\frac{n-1}{n}}\\ \nonumber 
&\leq & 1+ \frac{n-1}{n}\cdot \frac{|E\difsim F|}{|F|}\\ \nonumber
&\leq & 1+ \frac{4(n-1)}{3n|B|}\cdot |E\difsim F|,
\end{eqnarray}
where we have used Bernoulli inequality and the fact that $E\difsim F
\subset\subset B(x,\frac 12)$ implies $|F| \geq |B| - |B(x,\frac 12)| =
\frac 34 |B|$. Finally, using \eqref{eq:6} and \eqref{eq:7} 
we can rewrite \eqref{eq:4} as
\[\Per(E)\leq \Per(F)+|E\difsim F|\left(	C_{n,f}+C_{n,g}|\cF(E)|\Per(B)+\frac{4(n-1)}{3n|B|}\Per(E)\right),
\]
which turns out to imply 
\[
\Per(E) \leq \Per(F) + \Lambda|E\difsim F|,
\]
once we note that the constant
\begin{eqnarray}\label{Lambda}
\Lambda=C_{n,f}+C_{n,g}|\cF(E)|\Per(B)+\frac{4(n-1)}{3n|B|}\Per(E)
\end{eqnarray}
depends only on the 
dimension $n$ and on the functions $f$ and $g$. 
\end{proof}
As a consequence of Theorems \ref{teo:genexists}, \ref{teo:lmin}
  and \ref{teo:lminreg} one obtains the following
\begin{teo}[Regularity]\label{teo:reg}
Let $\cF$ be the functional defined in \eqref{callF} and let
$E\in\Se^n$ be a minimizer of $\cF$, with $|E| = |B|$. Then, $\de^* E$
is of class $C^{1,\eta}$ for any $\eta\in (0,1)$ ($C^{1,1}$ for
$n=2$), while the singular set $\de E \setminus \de^* E$ has Hausdorff
dimension $\leq n-8$. 
\end{teo}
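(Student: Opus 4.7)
The statement is essentially a direct corollary of the two preceding theorems combined with the classical regularity theorem \ref{teo:lminreg}, so my proof plan is brief.

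First, I would dispose of the trivial case $\asym(E)=0$: by definition of the Fraenkel asymmetry this forces $E$ to coincide with $B$ (up to translation and Lebesgue-null sets), and the conclusion is immediate since $\de B$ is smooth and has empty singular set.

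Assume therefore that $\asym(E)>0$. The key input is Theorem \ref{teo:lmin}, which tells us that any minimizer $E\in\Se^n$ with $|E|=|B|$ is a $\Lambda$-minimizer of the perimeter, with $\Lambda$ given explicitly by \eqref{Lambda} in terms of $n$, the Lipschitz constants of $f$ and $g$, and the quantities $|\cF(E)|$ and $\Per(E)$. In particular $E\in\QM(R,\Lambda)$ for some $R>0$ and this $\Lambda$.

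Having placed $E$ inside the class $\QM(R,\Lambda)$, I would simply invoke Theorem \ref{teo:lminreg}(ii) applied to the trivial constant sequence $E_h\equiv E$, which obviously converges to $E$ in $L^1_{\rm loc}(\R^n)$. This yields at once that $\de^*E$ is a hypersurface of class $C^{1,\eta}$ for every $\eta\in(0,1)$ (and of class $C^{1,1}$ when $n=2$), and that the singular set $\de E\setminus\de^*E$ has Hausdorff dimension at most $n-8$.

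There is no real obstacle here: the substantive work has already been done in Theorems \ref{teo:genexists} and \ref{teo:lmin}, and the deep regularity results of De Giorgi--Massari--Tamanini--Almgren packaged in Theorem \ref{teo:lminreg} are being used as a black box. The only mild subtlety is verifying that $\Lambda$ in \eqref{Lambda} is indeed finite at the given minimizer, which follows because $\cF(E)\le\cF(B')<\infty$ for any set $B'\in\Se^n_{\alpha_0}$ used in the existence proof, and $\Per(E)$ is controlled by the minimality inequality $\Per(E)\le\Per(B)+\Per(B)\,g(\asym(E))\cF(E)-\Per(B)f(\asym(E))$ together with the boundedness of $f,g$ on $[0,2]$.
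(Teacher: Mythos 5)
Your proof is correct and coincides with the paper's, which simply records Theorem \ref{teo:reg} as ``a consequence of Theorems \ref{teo:genexists}, \ref{teo:lmin} and \ref{teo:lminreg}'' without further argument: one invokes the $\Lambda$-minimality from Theorem \ref{teo:lmin} and then applies Theorem \ref{teo:lminreg}(ii) to the constant sequence $E_h\equiv E$. Your closing remark verifying finiteness of the constant $\Lambda$ in \eqref{Lambda} via the identity $\Per(E)=\Per(B)\bigl(1+g(\asym(E))\cF(E)-f(\asym(E))\bigr)$ is a sound (if unstated in the paper) sanity check.
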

In the following lemma, we let $E$ be a
minimizer of $\cF$ and we show that the (scalar) mean curvature $H$
of $\de E$ belongs to $L^\infty(\de E)$. Moreover, we
compute a first 
variation inequality of $\cF$ at $E$ that translates into a
quantitative estimate of the oscillation of the mean curvature.
\begin{lemma}\label{lemma:firstvar}
Let $E$ be a minimizer of $\cF$. Then $\de^* E$ has scalar mean
curvature $H \in L^\infty(\de^* E)$ (with
orientation induced by the inner normal to $E$). Moreover, for
${\Hau}^{n-1}$-a.e. $x,y\in \de^* E$, one has 
\begin{equation}
  \label{Ffirstvar}
  |H(x) - H(y)| \leq \frac{n}{n-1}\Big(|\cF(E)|\lip(g)+\lip(f)\Big);
\end{equation}
\end{lemma}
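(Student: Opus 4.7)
The plan is to exploit the minimality of $E$ with respect to $\cF$ via smooth, essentially volume-preserving normal variations localized near pairs of Lebesgue points of $H$. By Theorem \ref{teo:lmin}, $E$ is a $\Lambda$-minimizer of the perimeter, and by Theorem \ref{teo:reg}, $\de^* E$ is of class $C^{1,\eta}$, so classical first-variation calculus on $\de^* E$ is available.

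As a preliminary step I would obtain the $L^\infty$-bound on $H$ as a direct consequence of $\Lambda$-minimality: for every compactly supported smooth normal variation of $E$ producing $F_t$, the inequality $\Per(E) \leq \Per(F_t) + \Lambda |E \difsim F_t|$ combined with the classical first variation of area forces $|H|$ to be essentially bounded by a constant depending only on $\Lambda$ and $n$.

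For the oscillation estimate I fix two Lebesgue points $x, y \in \de^* E$ of $H$ and work in disjoint balls $B(x,\rho)$ and $B(y,\rho)$ in which $\de E$ is a smooth graph over its tangent plane. I then construct a vector field $X$ on $\R^n$ whose normal component $X\cdot\nu_E$ is supported in $\de^* E \cap (B(x,\rho)\cup B(y,\rho))$ and has the form $\varphi_1 - \alpha \varphi_2$, with $\varphi_i \geq 0$ concentrated near $x$ and $y$ respectively and $\alpha$ chosen so that $\int X \cdot \nu_E \, d\Hau^{n-1} = 0$. The flow $\Phi_t$ of $X$ produces $E_t = \Phi_t(E)$ with $|E_t| = |E| + O(t^2)$, and a rescaling of order $t^2$ restores the constraint $|E_t| = |B|$ exactly without affecting the first-order analysis. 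For small $t$, the admissibility condition $\asym(E_t) \geq \alpha_0$ is automatic whenever $\asym(E) > \alpha_0$; the boundary case $\asym(E) = \alpha_0$ is handled by an approximation.

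The core of the argument is to plug $E_t$ into the minimality $\cF(E) \leq \cF(E_t)$; cross-multiplying by $g(\asym(E))>0$ yields
\[
\defP(E_t) - \defP(E) \geq \cF(E)\bigl(g(\asym(E_t)) - g(\asym(E))\bigr) - \bigl(f(\asym(E_t)) - f(\asym(E))\bigr).
\]
Using the Lipschitz continuity of $f$ and $g$ together with the elementary estimate $|\asym(E_t) - \asym(E)| \leq |E_t \difsim E|/|B|$, the right-hand side is bounded below by $-(|\cF(E)|\lip(g) + \lip(f))\cdot |E_t \difsim E|/|B|$. On the left side, the first variation formula for area and the Lebesgue-point property at $x$ and $y$ give $\Per(E_t) - \Per(E) = (n-1) t \int H (X\cdot\nu_E)\, d\Hau^{n-1} + o(t)$, which after letting $\varphi_i$ concentrate to Dirac masses becomes $(n-1) t I_1 (H(x)-H(y)) + o(t)$, where $I_1 = \int \varphi_1$. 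Since $|E_t \difsim E|$ is comparable to $t I_1$, dividing by $t$ and swapping $x$ and $y$ produces the desired two-sided bound.

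The main obstacle I expect is recovering the sharp constant $n/(n-1)$ in \eqref{Ffirstvar}: a naive choice of $\varphi_1, \varphi_2$ supported at arbitrary points of $\de^* E$ yields a bound that is worse by a factor of $2$, because the direct estimate for $|\asym(E_t) - \asym(E)|$ loses mass on both sides of $\de B^*$. To sharpen the argument I would restrict $x$ and $y$ to lie both strictly inside, or both strictly outside, an optimal ball $B^*$ for $E$; for such pairs the first-order change of $|E_t \difsim B^*|$ cancels by volume preservation, and $|\asym(E_t) - \asym(E)|$ is controlled modulo a correction coming from the drift of the optimal center, which is small thanks to the continuity (in Hausdorff sense) of the optimal-center set. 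Finally, a density argument, using that the regular part of $\de^* E$ meets each of these two regions in an $\Hau^{n-1}$-full set, extends the pointwise bound to $\Hau^{n-1}$-a.e.\ pair of Lebesgue points, completing the proof.
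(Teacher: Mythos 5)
Your strategy matches the paper's quite closely: both arguments perturb $E$ by a localized inflation near one boundary point and a deflation near another, preserve the volume constraint, expand the minimality inequality $\cF(E)\le\cF(E_t)$, isolate the first variation of perimeter, and then concentrate the test functions to Lebesgue points of $H$, finally swapping the roles of the two points to obtain the two-sided bound. A cosmetic difference is that the paper works directly with graph perturbations $f_{m,t}=f_m+(-1)^m t\varphi_m$ on two disjoint patches of $\de E$, normalized by $\int\varphi_1=\int\varphi_2=1$ so that $|E_t|=|E|$ holds \emph{exactly} and no $O(t^2)$ rescaling is required, whereas you flow by a vector field and correct the volume afterwards; the paper's version is slightly cleaner but the idea is the same.

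The more substantive divergence is in the asymmetry estimate, and here your caution is warranted. The paper asserts $|\asym(E_t)-\asym(E)|\le t/\omega_n$ and uses it with no restriction on where the two patches sit relative to the optimal ball; but since $|E_t\difsim E|\simeq 2t$ (the deflation removes measure $t$, the inflation adds measure $t$), the elementary triangle-inequality estimate only gives $|\asym(E_t)-\asym(E)|\le 2t/\omega_n$, which would produce the constant $\tfrac{2n}{n-1}$ rather than $\tfrac{n}{n-1}$. Your proposed repair — take $x,y$ in the same \emph{free region} (both strictly inside, or both strictly outside, $x_0+\de B$ for the optimal center $x_0$), and track the drift of the optimal center — is precisely the situation discussed in Remark~\ref{oss:firstvar}, where the asymmetry is unchanged to first order and one in fact gets $H(x)=H(y)$, not merely a bound. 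However, this cannot be upgraded by a density argument to \eqref{Ffirstvar} for $\Hau^{n-1}$-a.e.\ \emph{arbitrary} pairs $x,y$: passing through an intermediate point in the other region via the triangle inequality on $H$ reintroduces the factor of $2$. So your method is the right one and your suspicion about the constant is reasonable, but the proposed fix does not yield the lemma as stated; one should either accept the weaker constant $\tfrac{2n}{n-1}$ (which is harmless for every subsequent use of the lemma, since only $\Delta^{(m)}_j\to 0$ is needed in Lemma~\ref{lemma:firstvarQ}), or justify the sharper one-sided estimate the paper relies on.
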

\begin{proof}
To prove the theorem we consider a ``parametric
inflation-deflation'', that will lead to the first variation
inequality \eqref{Ffirstvar}. 

Let us fix $x_1,\ x_2\in\partial^* E$ be such that $x_1\not=x_2$. By
Theorem \ref{teo:reg}, there exist $r>0$ such that, for $m=1,2$
\[
\de E\cap B(x_m,r) = \de^* E\cap B(x_m,r)
\]
is the graph of a smooth function $f_m$ defined on an open set
$A_m\subset \R^{n-1}$, with respect to a suitable reference
frame, so that the set $E\cap B(x_m,r)$ ``lies below'' the graph of
$f_m$. For $m=1,2$ we take $\varphi_m\in C^1_c(A_m)$ such that
$\varphi_m\geq 0$ and 
\begin{equation}
  \label{eq:fipsi}
  \int_{A_m} \varphi_m = 1.
\end{equation}
Let $\eps>0$ be such that, setting $f_{m,t}(w) = f_m(w) +(-1)^m
t\varphi_m(w)$ for $w\in A_m$, one has $\graph(f_{m,t}) \subset
B(x_m,r)$ for all $t\in (-\eps,\eps)$. We use the functions
$f_{m,t}$, $m=1,2$, to modify the set $E$, i.e. we define $E_t$ such
that $E_t\difsim E$ is compactly contained in $B(x_1,r)\cup B(x_2,r)$,
with $\de E_t\cap B(x_m,r) = \graph(f_{m,t})$ for $m=1,2$. 
By \eqref{eq:fipsi} one immediately deduces that $|E_{t}| =
|E|$. Moreover, by a standard computation one obtains
\begin{equation}\label{teo:firstvar_perimeter}
\frac{1}{n-1}\frac{d}{dt}\Per(E_{t})_{|_{t=0}} =
\int_{A_1}h_1 \varphi_1 -
\int_{A_2}h_2 \varphi_2,
\end{equation}
where for $m=1,2$
\[
h_m(v) := H(v,f_m(v)) = -\frac{1}{n-1}\mdiv \left( \frac{\nabla
f_m(v)}{\sqrt{1+|\nabla f_m(v)|^2}}\right).
\]
Then, by Theorem 4.7.4 in \cite{Ambrosio97}, the
$L^\infty$-norm of $H$ over $\de E$ turns out to be bounded by a
constant depending only on $\Lambda$ and on the dimension $n$. 

By the definition of $E_{t}$ one can verify that, for $t>0$  
\begin{eqnarray}\label{teo:firstvar_asymmetry}
|\asym(E_{t})-\asym(E)|\leq \frac{t}{\omega_n}.
\end{eqnarray}
By \eqref{teo:firstvar_perimeter} and \eqref{teo:firstvar_asymmetry},
and for $t>0$, we also have that
\begin{eqnarray*}
\cF(E_t)&=&\frac{\Per(E_t)-\Per(B)+\Per(B)f(\asym(E_t))}{\Per(B)g(\asym(E_t))}\\
&=&\frac{\Per(E)-\Per(B)+\Per(B)f(\asym(E_t))}
{\Per(B)g(\asym(E_t))}+\frac{t}{\Per(B)g(\asym(E_t))}\frac{d}{dt}\Per(E_{t})_{|_{t=0}}+o(t)
\\
&=&
\cF(E)\cdot \frac{g(\asym(E))}{g(\asym(E_t))} + \frac{f(\asym(E_t)) -
  f(\asym(E))}{g(\asym(E_t))} + 
\frac{t}{n\omega_ng(\asym(E_t))}\frac{d}{dt}\Per(E_{t})_{|_{t=0}}+o(t)\\
&\leq&\cF(E) + \frac{t}{n\omega_ng(\asym(E_t))}\left(n|\cF(E)|\lip(g)+n\,\lip(f)+\frac{d}{dt}\Per(E_{t})_{|_{t=0}}\right)+o(t)
\end{eqnarray*}
Exploiting now the minimality hypothesis $\cF(E)\leq \cF(E_{t})$
in the previous inequality, dividing by $t>0$, multiplying by
$n\omega_ng(\asym(E_t))$, and finally taking the limit as
$t$ tends to $0$, we obtain 
\begin{equation}\label{ttendeazero}
0\leq \frac{d}{dt}P(E_{t})_{|t=0} +n|\cF(E)|\lip(g)+n\,\lip(f).    
\end{equation}
Let now $w_m\in A_m$ be a Lebesgue
point for $h_{f_m}$, $m=1,2$. On choosing a sequence
$(\varphi_m^k)_k\subset C^1_c(A_m)$ of non-negative
mollifiers, such that  
\[
\lim_k \int_{A_m} h_{f_m} \varphi_m^k = h_{f_m}(w_m)
\]
for $m=1,2$, we obtain that for $E_{t}^k$ defined as before, but
with $\varphi_m^k$ replacing $\varphi_m$, it holds
\begin{eqnarray}\label{eq:firstvar_curvature}
\frac{1}{n-1}\lim_{k}\frac{d}{dt}P(E_{t}^k)_{|_{t=0}} &=& \lim_k
\int_{A_1} h_{f_1}\varphi_1^k - \int_{A_2} h_{f_2}\varphi_2^k\\ 
\nonumber &=& h_{f_1}(w_1) - h_{f_2}(w_2).
\end{eqnarray}
Moreover, from \eqref{ttendeazero} with $E_{t}^k$ in place of
$E_{t}$ and thanks to \eqref{eq:firstvar_curvature}, we get 
\begin{equation}
h_{f_2}(w_2) - h_{f_1}(w_1) = -\frac{1}{n-1}\lim_k
\frac{d}{dt}P(E_{t}^k)_{|t=0} \leq 
\frac{n}{n-1}\left(|\cF(E)|\lip(g)+\lip(f)\right).    
\end{equation}
Finally, the proof of \eqref{Ffirstvar} is achieved
by exchanging the roles of $x_1$ and $x_2$.
\end{proof}
\begin{osser}\label{oss:firstvar}
Under the hypotheses of the previous theorem, if we additionally
suppose that $f,g$ are $C^1$ functions, then arguing as above we 
obtain, for ${\Hau}^{n-1}$-a.e. $x,y\in \de^*E$,
\begin{eqnarray*}
  |H(x) - H(y)| \leq \frac{n}{n-1}\left(|\cF(E)|\cdot |g'(\asym(E))|+|f'(\asym(E))|\right).
\end{eqnarray*}
Moreover, if $x,y\in \de^* E$ are such that the inflation-deflation procedure in a
small neighbourhood of $\{x,y\}$ does not change the asymmetry (i.e.,
if $\asym(E_t) = \asym(E)$ for $t$ small) then we get $H(x) =
H(y)$. This property is verified, in particular, by any pair $(x,y)$ of
points belonging to a same \textit{free region}. We call free region
any connected component of the set
\[
\Omega = \R^n \setminus \bigcup_{x\in \optcen(E)} (x+\de B),
\]
where $\optcen(E)$ is the set of optimal centers for $E$, that is,
$z\in \optcen(E)$ if and only if $|E\difsim (z+B)| = |B| \asym(E)$.  
Note that $\Omega$ is open, since it is the complement of a compact
set. It is not difficult to show 
that small inflations-deflations localized in a free region
$A\subset \Omega$ do not
change the asymmetry, thus implying that the intersection $A\cap \de E$ has constant
mean curvature. Clearly, the value of the mean curvature can change
from one free region to another.
\end{osser}

\section{Quantitative isoperimetric quotients of order $m$}

For any $E\in \Se^n$ we set
\begin{equation*}
Q^{(1)}(E)= \begin{cases}\displaystyle
\frac{\defP(E)}{\asym(E)}&\text{if }\asym(E) >0\\
\inf\left\{\liminf_k \frac{\defP(F_k)}{\asym(F_k)},\
  |F_k|=|B|,\ \asym(F_k)>0,\ |F_k\difsim B|\to_k 0 \right\} &\text{otherwise. }
\end{cases}
\end{equation*}
We recall that the optimal power of the asymmetry in the quantitative
isoperimetric inequality is $2$, thus we necessarily have $Q^{(1)}(B)
= 0$ (this can be also seen through a straightforward computation made on a sequence of ellipsoids converging to the ball $B$). 
 
Analogously, for a given integer $m\geq 2$ and assuming that
$Q^{(k)}(B) \in \R$ for all $k=1,2,\dots,m-1$, we define for any $E\in \Se^n$ such that
$\asym(E)>0$ 
\begin{equation*}
\Qm(E)=\frac{\Qmmu(E)-\Qmmu(B)}{\asym(E)},
\end{equation*}
and
\begin{equation*}
\Qm(B)= \inf\left\{\liminf_k \Qm(F_k),\
  |F_k|=|B|,\ \asym(F_k)>0,\ |F_k\difsim B|\to_k 0 \right\}.
\end{equation*}
Note that, assuming $\asym(E)>0$ and recalling that $Q^1(B) =
  0$, it turns out that 
\[
Q^{(2)}(E) = \frac{\defP(E)}{\asym(E)^2}
\]
is precisely the sharp quantitative isoperimetric quotient. Hence, by
\eqref{intro:bc}, it is bounded from below by a positive, dimensional
constant and, as a consequence, $Q^{(2)}(B)$ is finite and strictly
positive. 

In what follows, we shall often say that $\Qm$ is
  \textit{well-defined} simply meaning that we are inductively assuming
  $Q^{(k)}(B)$ finite for all $k=1,2,\dots,m-1$. Clearly, this does
  \textit{not} necessarily imply that also $\Qm(B)$ is finite. 
One can easily check that the functional $\Qm$ is lower
semicontinuous on the whole class $\Se^n$. However, it is not possible
to immediately get the finiteness of $\Qm(B)$, and in particular one
cannot \textit{a priori} exclude that $\Qm(B)=-\infty$. 

By the previous definition, if $m\geq 2$, a well defined $\Qm$ can be 
equivalently written, for $\asym(E)>0$, as
\begin{equation}\label{def:Qm_psi}
\Qm(E) = \frac{\defP(E) - \psi_m(\asym(E))}{\asym(E)^{m}}
\end{equation}
where we have set
\[
\psi_m(\asym) = \sum_{i=1}^{m-1} Q^{(i)}(B)\asym^i \, .
\]
We now define the penalized functionals $\Qmj$ for $m\geq 2$.
We assume $\Qmmu(B)\in \R$ and choose a recovery sequence $(W^{(m)}_j)_j$
for $\Qm(B)$. Then, setting $\amj = \asym(W^{(m)}_j)$, for any
$E\in \Se^n$ with $\asym(E)>0$ we define
\begin{equation}\label{def:Qmj}
\Qmj(E) = \frac{\Qmmu(E) - \Qmmu(B) +
  \left(\frac{\asym(E)}{\amj} - 1\right)^2}{\asym(E)}.
\end{equation}
One can immediately check that
\begin{eqnarray*}
\Qmj(E) &=& \Qm(E) + \frac{\left(\frac{\asym(E)}{\amj} - 1\right)^2}{\asym(E)}\\ 
&=& \frac{\defP(E) - \psi_m(\asym(E)) + \asym(E)^{m-1}\left(\frac{\asym(E)}{\amj} - 1\right)^2}{\asym(E)^m}.
\end{eqnarray*}
Note that, for $m=2$, the definition of $Q^{(2)}_j$ is slightly different
from the penalized functional introduced in \cite{CicLeo10}. 

We conclude the section with an immediate consequence of Theorem \ref{teo:genexists} concerning the
existence of minimizers of the functional $\Qm$. 
\begin{corol}
Assume that $\Qm$ is well-defined and that
  $\Qm(B)>-\infty$. Then, $\Qm$ attains its minimum in the class $\Se^n$.
\end{corol}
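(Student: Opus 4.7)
The plan is to deduce the corollary from Theorem \ref{teo:genexists} by identifying $\Qm$ with a functional of the form $\cF_{f,g}$. Setting $f_m(\alpha) = -\psi_m(\alpha) = -\sum_{i=1}^{m-1} Q^{(i)}(B)\,\alpha^i$ and $g_m(\alpha) = \alpha^m$, formula \eqref{def:Qm_psi} gives $\Qm(E) = \cF_{f_m, g_m}(E)$ whenever $\asym(E)>0$, and the two functionals also coincide at sets with $\asym=0$ because they are both defined by relaxation along sequences $(F_k)\subset\Se^n$ with $\asym(F_k)>0$ and $|F_k \difsim B|\to 0$. The hypothesis that $\Qm$ is well-defined makes $f_m$ a polynomial (hence Lipschitz on $[0,2]$), while $g_m$ is obviously Lipschitz on $[0,2]$ and vanishes only at $0$, so $\cF_{f_m,g_m}$ fits the framework of Theorem \ref{teo:genexists}.

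Set $\mu := \inf_{E \in \Se^n} \Qm(E) \leq \Qm(B)$ and fix a minimizing sequence $(E_k)_k \subset \Se^n$. I split into cases according to $\liminf_k \asym(E_k)$. If $\liminf_k \asym(E_k) = 0$, I pass to a subsequence with $\asym(E_k) \to 0$ and translate each $E_k$ by an optimal center, which leaves $\Qm(E_k)$ unchanged by isometry invariance; the resulting sets satisfy $|E_k \difsim B|\to 0$. The very definition of $\Qm(B)$ as a relaxed infimum then yields $\Qm(B) \leq \liminf_k \Qm(E_k) = \mu$, and combining with $\mu \leq \Qm(B)$ gives $\mu = \Qm(B)$, which is finite by assumption and is realized by $B$ itself. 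If instead $\liminf_k \asym(E_k) > 0$, I fix any $\alpha_0 \in (0,\liminf_k \asym(E_k))$; then $E_k \in \Se^n_{\alpha_0}$ for all $k$ large enough, and Theorem \ref{teo:genexists} supplies a minimizer $\hat E \in \Se^n_{\alpha_0}$ of $\cF_{f_m, g_m} = \Qm$ on $\Se^n_{\alpha_0}$. Hence $\Qm(\hat E) \leq \Qm(E_k)$ for all sufficiently large $k$, which forces $\Qm(\hat E) = \mu$ and exhibits $\hat E$ as a global minimizer.

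The only genuine subtlety is the case split: Theorem \ref{teo:genexists} produces minimizers only inside the subclass $\Se^n_{\alpha_0}$, so if no positive threshold $\alpha_0$ keeps a minimizing sequence away from zero asymmetry, one cannot appeal to that theorem directly and must instead observe that the infimum then coincides with $\Qm(B)$ through the very definition of the relaxation. The assumption $\Qm(B) > -\infty$ is precisely what renders this ``degenerate'' scenario harmless and ensures that $\mu$ is finite and attained in all cases.
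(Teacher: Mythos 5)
Your argument is correct and follows essentially the same route as the paper: identify $\Qm$ with $\cF_{f,g}$, then dichotomize on whether a minimizing sequence can stay at vanishing asymmetry (in which case the relaxed definition of $\Qm(B)$ forces $\inf\Qm=\Qm(B)$, realized by $B$) or is eventually trapped in some $\Se^n_{\alpha_0}$ (in which case Theorem \ref{teo:genexists} applies). One small remark: the matching $\Qm=\cF_{f,g}$ requires $f=-\psi_m$ as you wrote, not $f=\psi_m$ as stated in the paper's proof, a harmless sign typo there since only the Lipschitz property of $f$ is used.
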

\begin{proof}
Either $\Qm(B) \leq \Qm(F)$ for all $F\in \Se^n$ (and thus $B$ is the
required minimizer) or $\inf_{\Se^n} \Qm = \inf_{\Se^n_\beta} \Qm$ for
some $\beta>0$. In the latter case, we first observe that, on choosing 
$f(\alpha)=\psi_m(\alpha)$ and $g(\alpha)=\alpha^m$, we have $\cF_{f,g}=\Qm$. Then, applying
Theorem \ref{teo:genexists}, we get that $\Qm$ is minimized on $\Se^n_\beta$, whence the thesis.
\end{proof}

\section{The Iterative Selection Principle}

\begin{teo}[Iterative Selection Principle]\label{teo:ISP}
Let $m\geq 2$ and assume that $Q^{(k)}(B) \in \R$ for all
$k=1,\dots,m-1$. Then, there exists a sequence of sets $(E^{(m)}_j)_j\subset \Se^2$, such that
\begin{itemize}
\item[(i)] $|E^{(m)}_j| = |B|$, $\asym(E^{(m)}_j)>0$ and $\asym(E^{(m)}_j) \to 0$ as $j\to \infty$;

\item[(ii)] $Q^{(m)}(E^{(m)}_j) \to Q^{(m)}(B)$ as $j\to \infty$;

\item[(iii)] for each $j$ there exists a function $u^{(m)}_j\in C^1(\de B)$
  such that 
\[
\de E^{(m)}_j = \{(1+u^{(m)}_j(x))x:\ x\in \de B\}
\]
and $u^{(m)}_j \to 0$ in the $C^1$-norm, as $j\to \infty$;

\item[(iv)] $\de E^{(m)}_j$ has curvature $H^{(m)}_j\in
  L^\infty(\de E^{(m)}_j)$ and $\|H^{(m)}_j - 1\|_{L^\infty(\de E^{(m)}_j)} \to 0$ as $j\to \infty$.
\end{itemize}
\end{teo}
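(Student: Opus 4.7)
The plan is to take $E^{(m)}_j$ as a minimizer of the penalized functional $\Qmj$ introduced in \eqref{def:Qmj}, built on a recovery sequence $(W^{(m)}_j)_j$ for $\Qm(B)$ with $\alpha^{(m)}_j:=\asym(W^{(m)}_j)\to 0$ and $\Qm(W^{(m)}_j)\to \Qm(B)$. The crucial structural observation is that $\Qmj$ fits into the framework of Section 3 by writing $\Qmj=\cF_{f_j,g_j}$ with $g_j(\alpha)=\alpha^m$ and $f_j(\alpha)=-\psi_m(\alpha)+\alpha^{m-1}(\alpha/\alpha^{(m)}_j-1)^2$, both Lipschitz on $[0,2]$; hence Theorems \ref{teo:genexists}, \ref{teo:lmin}, \ref{teo:reg} and Lemma \ref{lemma:firstvar} are all available.

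Since the penalization $(\alpha/\alpha^{(m)}_j-1)^2/\alpha$ diverges as $\alpha\to 0^+$, every $\Qmj$-minimizing sequence has asymmetry bounded below by some $\alpha_0(j)>0$, so Theorem \ref{teo:genexists} yields a minimizer $E^{(m)}_j$ with $\asym(E^{(m)}_j)>0$. Testing against $W^{(m)}_j$, on which the penalty vanishes, gives $\Qmj(E^{(m)}_j)\leq \Qm(W^{(m)}_j)$, and the non-negativity of the penalty then forces $(\asym(E^{(m)}_j)/\alpha^{(m)}_j-1)^2\leq C\,\asym(E^{(m)}_j)$, whence $\asym(E^{(m)}_j)=\alpha^{(m)}_j(1+o(1))\to 0$, proving (i). The resulting bound on $\Qm(E^{(m)}_j)$ makes $\defP(E^{(m)}_j)\to 0$, so after a translation placing an optimal ball of $E^{(m)}_j$ at the origin one has $E^{(m)}_j\to B$ in $L^1$; combining the lower-semicontinuity estimate $\liminf_j\Qm(E^{(m)}_j)\geq \Qm(B)$ with the upper bound $\Qm(E^{(m)}_j)\leq \Qmj(E^{(m)}_j)\leq \Qm(W^{(m)}_j)\to \Qm(B)$ gives (ii) and simultaneously forces the penalty to vanish in the limit.

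For (iv), Theorem \ref{teo:lmin} exhibits $E^{(m)}_j$ as a $\Lambda_j$-quasiminimizer, Theorem \ref{teo:reg} yields its $C^{1,\gamma}$-regularity, and Lemma \ref{lemma:firstvar} with Remark \ref{oss:firstvar} (applied to the smooth data $f_j,g_j$) provides the oscillation estimate
\[
|H^{(m)}_j(x)-H^{(m)}_j(y)|\leq \tfrac{n}{n-1}\bigl(|\Qmj(E^{(m)}_j)|\,|g_j'(\asym(E^{(m)}_j))|+|f_j'(\asym(E^{(m)}_j))|\bigr).
\]
Expanding $f_j'(\alpha)$ at $\alpha=\alpha^{(m)}_j(1+\e_j)$, where $\e_j=o(\sqrt{\alpha^{(m)}_j})$ thanks to the penalty estimate, the two a priori divergent $1/(\alpha^{(m)}_j)^2$-contributions cancel because $(\alpha/\alpha^{(m)}_j-1)^2$ has a double zero at $\alpha=\alpha^{(m)}_j$, leaving a remainder that vanishes as $j\to\infty$; since $g_j'(\asym(E^{(m)}_j))=O((\alpha^{(m)}_j)^{m-1})$ as well, the oscillation of $H^{(m)}_j$ tends to zero. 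To identify the common limiting value I use the first-variation identity $\Per(E^{(m)}_j)=\int_{\de E^{(m)}_j}H^{(m)}_j\,(x\cdot \nu_{E^{(m)}_j})\,d\Hau^{n-1}$ together with the Gauss-Green identity $\int_{\de E^{(m)}_j}(x\cdot \nu_{E^{(m)}_j})\,d\Hau^{n-1}=n|E^{(m)}_j|=\Per(B)$: fixing $y_0\in \de E^{(m)}_j$ and writing $H^{(m)}_j=H^{(m)}_j(y_0)+r_j$ with $\|r_j\|_{L^\infty}\to 0$, one finds $H^{(m)}_j(y_0)=\Per(E^{(m)}_j)/\Per(B)+o(1)\to 1$, so (iv) follows.

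Property (iii) is then obtained from the uniform a posteriori bound $\|H^{(m)}_j\|_{L^\infty}\leq 1+o(1)$ just established, combined with $E^{(m)}_j\to B$ in $L^1$ and the volume constraint $|E^{(m)}_j|=|B|$: by standard regularity for sets with bounded mean curvature (Massari/Allard $\eps$-regularity, equivalently Theorem \ref{teo:lminreg}(iii) applied with effective $\Lambda$ driven by the uniform $L^\infty$-bound on curvature), the boundaries admit the representation $\de E^{(m)}_j=\{(1+u^{(m)}_j(x))x:\ x\in \de B\}$ with $u^{(m)}_j\to 0$ in $C^1(\de B)$. The main obstacle throughout is precisely the cancellation invoked for (iv): $\lip(f_j)$, and hence the quasiminimality constant $\Lambda_j$, blows up like $1/(\alpha^{(m)}_j)^2$, so the regularity theory of Section 3 cannot be invoked with uniform constants; only the fact that $f_j'$ is evaluated at $\alpha=\alpha^{(m)}_j(1+o(1))$, where the quadratic penalty has a double zero, makes the curvature estimate uniform and allows the $C^1$-convergence to be recovered.
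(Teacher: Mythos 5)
Your proposal follows the paper's strategy almost exactly: minimize the penalized functionals $\Qmj$, exclude the ball via the divergence of the penalty, show $\asym(E^{(m)}_j)\sim\amj\to 0$ with $\Qm(E^{(m)}_j)\to\Qm(B)$ via lower semicontinuity plus the upper bound from the recovery sequence, and then pass to $\Lambda$-minimality, regularity and first-variation estimates. The observation that the penalty has a double zero at $\alpha=\amj$, so that $f_j'(\asym(E^{(m)}_j))$ stays bounded even though $\lip(f_j)\sim (\amj)^{-2}$, is exactly the mechanism the paper exploits (in Lemma \ref{lemma:regconv}, case 2, where $\lip(g)$ is replaced by the local derivative $g'(\asym(E))$).

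The one place where you diverge from the paper, and where a genuine circularity creeps in, is the order of (iii) and (iv). The paper first proves \emph{uniform} $\Lambda$-minimality (Lemma \ref{lemma:regconv}(i)) by splitting the comparison into the two regimes $|E\difsim F|>\asym(E)$ and $|E\difsim F|\leq\asym(E)$, which then gives (via Theorem \ref{teo:lminreg}(iii)) the $C^1$-convergence, the uniform bound $\|H^{(m)}_j\|_{L^\infty}\leq 4\Lambda/(n-1)$, and a uniform diameter bound; only afterwards is the limiting value $H^{(m)}_j\to 1$ identified. You instead go the other way: you first use Remark \ref{oss:firstvar} to get the oscillation bound (which does not need a uniform $\Lambda$), then identify the constant via the flux identity $\Per(E^{(m)}_j)=\int_{\de E^{(m)}_j}H^{(m)}_j(x\cdot\nu)\,d\Hau^{n-1}$, and finally try to bootstrap a uniform $\Lambda$ from the resulting curvature bound. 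But the remainder in your flux identity is controlled by $\|r_j\|_{L^\infty}\cdot\int_{\de E^{(m)}_j}|x\cdot\nu|$, and bounding $\int_{\de E^{(m)}_j}|x\cdot\nu|$ (equivalently, a uniform diameter bound or a quantitative ``no thin tentacles'' statement) is precisely the kind of information one would normally extract from a \emph{uniform} $\Lambda$-bound plus Hausdorff convergence of boundaries. At the stage where you invoke the flux identity, you only know $\Lambda_j$-minimality with a constant that may degenerate, so the estimate is not yet justified. This is not a fatal flaw in the overall approach, but it means the two-case argument of Lemma \ref{lemma:regconv} (or some surrogate giving a uniform diameter/density bound) cannot simply be dropped: it is the ingredient that breaks the loop you have created.
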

Note that in the iterative selection principle we do not assume the
finiteness of $\Qm(B)$. On one hand, the case $\Qm(B) = +\infty$ is
trivial since the thesis of the theorem is satisfied by any
sufficiently nice sequence of sets with positive asymmetry and
converging to $B$ (for instance, by a sequence of ellipsoids). On the
other hand, the case $\Qm(B) = -\infty$ can interestingly enough be treated
the same way as the finite case.

The proof of Theorem \ref{teo:ISP} will require some intermediate
results. Here we follow more or less the same proof scheme adopted in
\cite{CicLeo10}. First, we make the following observation:
\begin{lemma}[Ball exclusion]\label{lemma:noball}
Assume $\Qm$ well-defined. If $(F_h)_h$ is a minimizing sequence for
$\Qmj$, then there exists 
$\beta>0$ and $h_0\in \N$ such that $\asym(F_h) \geq \beta$ for all
$h\geq h_0$ (in other words, $F_h$ cannot converge to the ball $B$).
\end{lemma}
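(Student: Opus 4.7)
The strategy is to argue by contradiction. Suppose that, along a subsequence still denoted $(F_h)_h$, we have $\asym(F_h)\to 0$; since $\Qmj$ is translation invariant, after translating each $F_h$ by an optimal center I may additionally assume $|F_h\difsim B|\to 0$. My plan is to show that $\Qmj(F_h)\to+\infty$, which contradicts minimality because the recovery-sequence element $W^{(m)}_j$ itself is a competitor with finite cost $\Qmj(W^{(m)}_j)=\Qm(W^{(m)}_j)$, the penalization vanishing at $W^{(m)}_j$ since $\asym(W^{(m)}_j)=\amj$.

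The first key step is a lower bound on $\Qm(F_h)$ coming from the inductive hypothesis that $\Qmmu(B)$ is finite. Starting from the recursive identity
\[
\Qm(F_h)=\frac{\Qmmu(F_h)-\Qmmu(B)}{\asym(F_h)},
\]
and applying the defining inequality $\liminf_h\Qmmu(F_h)\geq\Qmmu(B)$, valid on any sequence with positive asymmetry converging to $B$ in $L^1$, I would deduce that for every $\eps>0$ and all sufficiently large $h$,
\[
\Qm(F_h)\geq -\frac{\eps}{\asym(F_h)}.
\]
The second key step is a lower bound on the penalization. For fixed $j$, $\asym(F_h)/\amj\to 0$, so $(\asym(F_h)/\amj-1)^2\to 1$, giving
\[
\frac{(\asym(F_h)/\amj-1)^2}{\asym(F_h)}\geq \frac{1}{2\asym(F_h)}
\]
for $h$ large. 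Summing the two inequalities yields $\Qmj(F_h)\geq (1/2-\eps)/\asym(F_h)$, and choosing $\eps<1/2$ forces $\Qmj(F_h)\to+\infty$, the desired contradiction.

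The main point requiring care is the correct invocation of the recursive definition of $\Qmmu(B)$: the liminf inequality applies only after translating $F_h$ so as to bring an optimal ball to the origin, so one has to verify beforehand the translation invariance of $\Qm$ and hence of $\Qmj$. Beyond this, the estimates are elementary; note in particular that the argument does not require $\Qm(B)$ itself to be finite, only $\Qmmu(B)$ (guaranteed by the hypothesis that $\Qm$ is well-defined), which is why, as anticipated in the paper, the case $\Qm(B)=-\infty$ needs no separate treatment.
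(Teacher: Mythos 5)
Your proof is correct and takes essentially the same route as the paper's: argue by contradiction, invoke the liminf inequality $\liminf_h \Qmmu(F_h)\geq\Qmmu(B)$ from the recursive definition, and observe that the penalization term $(\asym(F_h)/\amj-1)^2/\asym(F_h)$ blows up because $\asym(F_h)/\amj\to 0$ at fixed $j$. The paper presents the same computation after unrolling $\Qm$ into the $\psi_m$-form, whereas you work directly with the recursive identity and separate $\Qmj=\Qm+\text{penalty}$; you also usefully make explicit the translation step needed before the liminf inequality applies.
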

\begin{proof}
By contradiction, assume $\asym(F_h)\to 0$ as $h\to \infty$ (up to
subsequences). By the very definition of $\Qmmu(B)$, thanks to its finiteness, we have that
\begin{equation*}
\Qmmu(B) \leq \Qmmu(F_h)+o(1).
\end{equation*}
As a consequence, it holds that 
\begin{equation}\label{pippa}
\defP(F_h) - \psi_{m-1}(\asym(F_h)) \geq \Qmmu(B)
\asym(F_h)^{m-1} + o(\asym(F_h)^{m-1}).
\end{equation}
On the other hand, we have $\psi_m(\asym) = \psi_{m-1}(\asym) +
\Qmmu(B) \asym^{m-1}$, therefore thanks to \eqref{pippa}, and owing to the definition of $\Qmj$, we obtain
\begin{eqnarray*}
\Qmj(F_h) &\geq & \frac{
  \asym(F_h)^{m-1}\left(\frac{\asym(F_h)}{\amj}-1\right)^2+o(\asym(F_h)^{m-1})}{\asym(F_h)^m}\\
& = & \frac{\asym(F_h)^{m-1} + o(\asym(F_h)^{m-1})}{\asym(F_h)^m}.
\end{eqnarray*}
Since the right-hand side of this inequality tends to $+\infty$ as $h$
diverges, while the functional $\Qmj$ is not identically $+\infty$, we
get a contradiction. 
\end{proof}
On combining Theorem \ref{teo:genexists} with Lemma \ref{lemma:noball} we can prove the following
\begin{prop}
Assume $\Qm$ well-defined. Then, the associated penalized functional $\Qmj$
admits a minimizer $E^{(m)}_j\in \Se^n$ with $\asym(E^{(m)}_j)>0$. 
\end{prop}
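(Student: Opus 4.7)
The plan is to identify the penalized functional $\Qmj$ with a functional of the form $\cF_{f,g}$ introduced in \eqref{callF}, and then combine the existence Theorem \ref{teo:genexists} with the Ball Exclusion Lemma \ref{lemma:noball} to produce a minimizer with positive asymmetry.

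First, I would choose
\[
f(\alpha) \;=\; -\psi_m(\alpha) + \alpha^{m-1}\Big(\frac{\alpha}{\amj}-1\Big)^2, \qquad g(\alpha) \;=\; \alpha^m.
\]
Using the identity $\Qmmu(E)-\Qmmu(B) = \frac{\defP(E) - \psi_m(\asym(E))}{\asym(E)^{m-1}}$ (which follows from \eqref{def:Qm_psi} and the decomposition $\psi_m=\psi_{m-1}+Q^{(m-1)}(B)\alpha^{m-1}$), a direct computation from \eqref{def:Qmj} shows $\Qmj(E) = \cF_{f,g}(E)$ for every $E$ with $\asym(E)>0$; the two functionals also coincide at $\asym=0$ since they are defined by the same $\liminf$-procedure along sequences with $|F_k\difsim B|\to 0$ and $\asym(F_k)>0$. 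Both $f$ and $g$ are polynomials, and therefore Lipschitz on $[0,2]$; moreover $g\geq 0$ with $g(\alpha)=0$ iff $\alpha=0$, so $\cF_{f,g}$ meets the structural assumptions of Section 3.

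Next, I would fix any minimizing sequence $(F_h)_h\subset \Se^n$ for $\Qmj$. By Lemma \ref{lemma:noball} there exist $\beta>0$ and $h_0\in \N$ such that $F_h\in \Se^n_\beta$ for every $h\geq h_0$; in particular
\[
\inf_{\Se^n_\beta}\cF_{f,g} \;\leq\; \liminf_h \cF_{f,g}(F_h) \;=\; \inf_{\Se^n}\Qmj \;\leq\; \inf_{\Se^n_\beta}\Qmj \;=\; \inf_{\Se^n_\beta}\cF_{f,g},
\]
so these infima agree. Applying Theorem \ref{teo:genexists} with $\alpha_0=\beta$ yields a minimizer $E^{(m)}_j\in\Se^n_\beta$ of $\cF_{f,g}$ on $\Se^n_\beta$. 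Since $\asym(E^{(m)}_j)\geq \beta>0$ and $\cF_{f,g}(E^{(m)}_j) = \inf_{\Se^n}\Qmj$, the set $E^{(m)}_j$ is a global minimizer of $\Qmj$ in $\Se^n$, which is the claim.

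No serious obstacle is expected: the substantive work is done by Theorem \ref{teo:genexists} (existence via concentration–compactness modulo loss of mass) and Lemma \ref{lemma:noball} (which prevents a minimizing sequence from degenerating to the ball, where the denominator $g(\asym(\cdot))=\asym(\cdot)^m$ vanishes). The only verification needed here is the purely algebraic identification of $\Qmj$ with $\cF_{f,g}$ for an appropriate polynomial pair $(f,g)$, which then makes the existence theorem applicable once the asymmetry is bounded away from zero.
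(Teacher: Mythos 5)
Your proof is correct and follows the same route as the paper: identify $\Qmj$ with $\cF_{f,g}$ for a polynomial pair $(f,g)$, invoke Lemma \ref{lemma:noball} to bound the asymmetry of any minimizing sequence away from zero, and then apply Theorem \ref{teo:genexists} on $\Se^n_\beta$. Incidentally, your choice $f(\alpha)=-\psi_m(\alpha)+\alpha^{m-1}\left(\frac{\alpha}{\amj}-1\right)^2$ carries the correct sign in front of $\psi_m$; the $f_j$ displayed in the paper's proof of this proposition has a sign typo (the version given in the proof of Lemma \ref{lemma:regconv} agrees with yours).
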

\begin{proof}
We first observe that, on choosing
$f_j(\alpha)=\psi_m(\alpha)+\alpha^{m-1}\left(\frac{\alpha}{\amj}-1\right)^2$
and $g(\alpha)=\alpha^m$, we have $\cF_{f_j,g}=\Qmj$. Then, the thesis
is a direct consequence of Theorem \ref{teo:genexists} and of Lemma \ref{lemma:noball}. 
\end{proof}

\begin{lemma}\label{lemma:lowerbound}
 Assume $\Qm$ well-defined and $\Qm(B)>-\infty$. Then, there exists $\lambda_m\in \R$ such that 
\[
\Qm(E) \geq \lambda_m
\]
for all $E\in \Se^n$. 
\end{lemma}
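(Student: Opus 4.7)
I argue by contradiction: suppose $\Qm$ is not bounded below on $\Se^n$ and deduce that $\Qm(B) = -\infty$, contrary to hypothesis. Let $(E_h)_h \subset \Se^n$ be such that $\Qm(E_h) \to -\infty$. Since $\Qm$ is invariant under dilations, I may assume $|E_h| = |B|$; and since $\Qm$ is finite on balls, I may assume $\asym(E_h) > 0$ for every $h$. Passing to a subsequence, either (Case A) $\asym(E_h) \geq \alpha_0 > 0$ for all $h$, or (Case B) $\asym(E_h) \to 0$.

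\textbf{Case A (asymmetry bounded away from zero).} Here I use the representation \eqref{def:Qm_psi}:
\[
\Qm(E_h) \ =\ \frac{\defP(E_h) - \psi_m(\asym(E_h))}{\asym(E_h)^{m}} \ \geq\ \frac{-\,\psi_m(\asym(E_h))}{\asym(E_h)^{m}},
\]
where the inequality uses the classical isoperimetric inequality $\defP(E_h) \geq 0$. Because $\psi_m$ is a polynomial, it is bounded on the compact interval $[\alpha_0,2]$ (recall $\asym<2$), while the denominator is bounded below by $\alpha_0^m>0$. Hence $\Qm(E_h) \geq -M$ for a constant $M$ depending only on $m$ and $\alpha_0$, contradicting $\Qm(E_h) \to -\infty$.

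\textbf{Case B (asymmetry vanishing).} For each $h$, pick an optimal center $x_h \in \R^n$ for $E_h$, so that $|(E_h - x_h) \difsim B| = |B|\,\asym(E_h) \to 0$. Set $\tilde E_h := E_h - x_h$. Since $\Qm$ is translation invariant, $\Qm(\tilde E_h) = \Qm(E_h) \to -\infty$; moreover $\asym(\tilde E_h) = \asym(E_h) > 0$ and $|\tilde E_h \difsim B| \to 0$. Thus $(\tilde E_h)_h$ is an admissible test sequence in the definition of $\Qm(B)$, and
\[
\Qm(B) \ \leq\ \liminf_h \Qm(\tilde E_h) \ =\ -\infty,
\]
contradicting the standing assumption $\Qm(B) > -\infty$. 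Either case yields a contradiction, so $\Qm$ is bounded below by some $\lambda_m \in \R$ on all of $\Se^n$.

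\textbf{Main obstacle.} There is no serious difficulty: the two cases together cover every possible behaviour of a minimizing sequence, and each is dispatched by invariance of $\Qm$ combined with either the classical isoperimetric inequality or the very definition of $\Qm(B)$ as an infimum of liminfs along sequences collapsing to $B$.
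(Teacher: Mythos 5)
Your proof is correct and takes essentially the same route as the paper: both argue by contradiction, dispose of a vanishing-asymmetry subsequence by noting that it would force $\Qm(B)=-\infty$ (you do this via the translated admissible sequence in Case B, the paper says it in one line), and then use $\defP\geq 0$ together with the representation \eqref{def:Qm_psi} and boundedness of $\psi_m$ on $[\beta,2)$ to bound $\Qm$ below when the asymmetry stays away from zero. Your write-up simply makes the two cases explicit where the paper compresses them.
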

\begin{proof}
We argue by contradiction. If there existed a sequence $(F_h)_h$ of sets
in $\Se^n$ satisfying $\Qm(F_h) \to -\infty$ as $h\to \infty$, by
the assumption on $\Qm(B)$ we would find $\beta>0$ such that
$\asym(F_h)\geq \beta$ for $h$ sufficiently large. Consequently, from
the very definition of $\Qm$ and the fact that $\defP(F_h)\geq 0$ we would deduce that
\[
\Qm(F_h) \geq \frac{-\sup\{\psi_m(\alpha),\ \beta\leq\alpha<2\}}{\asym(F_h)} \geq
-\frac{|\sup\{\psi_m(\alpha),\ \beta\leq\alpha<2\}|}{\beta},
\]
which leads to a contradiction on observing that, by the assumptions, $\sup\{\psi_m(\alpha),\ \beta\leq\alpha<2\}\in\R$.
\end{proof}

The next proposition deals with the asymptotic behavior, as $j\to +\infty$, of the sequences $(E_j^{(m)})_j$, $(\Qm(E_j^{(m)}))_j$ and $(\asym(E_j^{(m)}))_j$.

\begin{lemma}\label{lemma:asymm_to_zero}
Let $\Qm$ be well-defined and let $E_j^{(m)}$ be a minimizer
of $\Qmj$, with $\Qm(B)<+\infty$. Then 
$E_j^{(m)}\to B$ in $L^1$, $\Qm(E_j^{(m)})\to \Qm(B)$ and $\frac{\asym(E_j^{(m)})}{\amj}\to 1$.
\end{lemma}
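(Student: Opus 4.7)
The engine of the proof is the competitor inequality obtained by testing the minimality of $E_j^{(m)}$ against the recovery sequence $W_j^{(m)}$ itself: since $\asym(W_j^{(m)})=\amj$, the penalty term $(\asym/\amj-1)^2/\asym$ vanishes identically on $W_j^{(m)}$, so minimality yields
\[
\Qmj(E_j^{(m)})\ \le\ \Qmj(W_j^{(m)})\ =\ \Qm(W_j^{(m)})\ \longrightarrow\ \Qm(B).
\]
In both admissible scenarios ($\Qm(B)\in\R$ or $\Qm(B)=-\infty$), the right-hand side is bounded above for large $j$. Since $\Qmj$ is translation-invariant, I may further replace $E_j^{(m)}$ by a translate for which the origin is an optimal center.

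First I would prove $\asym(E_j^{(m)})\to 0$ by contradiction. Assume $\asym(E_j^{(m)})\ge\beta>0$ along a subsequence: since $\defP\ge 0$ and $\psi_m$ is a polynomial bounded on $[\beta,2)$, one obtains a finite lower bound $\Qm(E_j^{(m)})\ge -C_\beta$; on the other hand $\amj\to 0$ together with $\asym(E_j^{(m)})\ge\beta$ forces the penalty $(\asym(E_j^{(m)})/\amj-1)^2/\asym(E_j^{(m)})$ to diverge to $+\infty$, hence $\Qmj(E_j^{(m)})\to+\infty$, contradicting the upper bound above. With $\asym(E_j^{(m)})\to 0$ in hand, the choice of optimal center gives $|E_j^{(m)}\difsim B|=|B|\,\asym(E_j^{(m)})\to 0$, so $E_j^{(m)}\to B$ in $L^1$.

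For the remaining conclusions, the key observation is that $E_j^{(m)}$ now qualifies in the infimum defining $\Qm(B)$, whence $\liminf_j\Qm(E_j^{(m)})\ge\Qm(B)$. Combined with $\Qm(E_j^{(m)})\le\Qmj(E_j^{(m)})\le\Qm(W_j^{(m)})\to\Qm(B)$ (non-negativity of the penalty plus the bound from step one), this sandwich proves $\Qm(E_j^{(m)})\to\Qm(B)$. When $\Qm(B)\in\R$, subtracting gives the penalty estimate
\[
\frac{(\asym(E_j^{(m)})/\amj-1)^2}{\asym(E_j^{(m)})}\ =\ \Qmj(E_j^{(m)})-\Qm(E_j^{(m)})\ \le\ \Qm(W_j^{(m)})-\Qm(E_j^{(m)})\ \longrightarrow\ 0,
\]
and since $\asym(E_j^{(m)})<2$ this forces $\asym(E_j^{(m)})/\amj\to 1$. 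The main obstacle I anticipate is the degenerate case $\Qm(B)=-\infty$, where $\Qm(W_j^{(m)})$ and $\Qm(E_j^{(m)})$ both diverge and the clean subtraction above becomes indeterminate; I would handle this by an indirect argument, supposing the penalty stayed bounded away from $0$ along a subsequence and then producing, via a diagonal refinement of $(W_j^{(m)})$, a competitor whose asymmetry is comparable to $\asym(E_j^{(m)})$ and whose $\Qm$-value undercuts $\Qm(E_j^{(m)})$ by more than the presumed penalty, contradicting the minimality of $E_j^{(m)}$.
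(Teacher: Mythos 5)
Your argument for $\asym(E_j^{(m)})\to 0$ (the contradiction via the exploding penalty for $\asym\ge\beta$ and $\amj\to 0$), and the sandwich $\Qm(E_j^{(m)})\le\Qmj(E_j^{(m)})\le\Qm(W_j^{(m)})$ combined with the lower-semicontinuity inequality $\liminf_j\Qm(E_j^{(m)})\ge\Qm(B)$, are both correct and close to the paper's line of reasoning. The gap is exactly where you flagged it: the proof of $\asym(E_j^{(m)})/\amj\to 1$ in the case $\Qm(B)=-\infty$. The ``diagonal refinement'' you sketch is not a proof. The obstruction is concrete: from minimality you only get $\Qm(E_j^{(m)})+\text{penalty}\le\Qm(W_j^{(m)})$, and when both $\Qm$-values drift to $-\infty$ this inequality no longer pins down the penalty; moreover, to build a competitor with asymmetry exactly $\amj$ (so that its penalty vanishes) and $\Qm$-value below $\Qm(E_j^{(m)})$ by a fixed margin, you would need a continuity argument in the asymmetry parameter that is not available from the bare definition of the recovery sequence.

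The paper's way around this is to drop one level, from $m$ to $m-1$, precisely because well-definedness guarantees $\Qmmu(B)\in\R$ even when $\Qm(B)=-\infty$. Write the uniform upper bound as
\begin{equation*}
\Lambda_m\ \ge\ \Qmj(E_j^{(m)})\ =\ \frac{\Qmmu(E_j^{(m)})-\Qmmu(B)+\bigl(\asym(E_j^{(m)})/\amj-1\bigr)^2}{\asym(E_j^{(m)})}.
\end{equation*}
A global lower bound $\Qmmu\ge\lambda_{m-1}$ on $\Se^n$ (this is Lemma \ref{lemma:lowerbound} applied to $\Qmmu$, legitimate because $\Qmmu(B)>-\infty$) together with $\asym<2$ first gives a uniform bound on $\bigl(\asym(E_j^{(m)})/\amj-1\bigr)^2$, hence $\asym(E_j^{(m)})\lesssim\amj\to 0$ --- an alternative to your contradiction step. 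Then, once $E_j^{(m)}\to B$ in $L^1$, lower semicontinuity of $\Qmmu$ at $B$ gives $\Qmmu(E_j^{(m)})\ge\Qmmu(B)+o(1)$, and substituting this back yields $\bigl(\asym(E_j^{(m)})/\amj-1\bigr)^2\le\Lambda_m\,\asym(E_j^{(m)})+o(1)\to 0$. This treats $\Qm(B)\in\R$ and $\Qm(B)=-\infty$ uniformly, with no case distinction and no extra construction. In short: replace the subtraction at level $m$ (where one side may be $-\infty$) with the same algebra at level $m-1$, where all quantities are finite by the inductive hypothesis.
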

\begin{proof}
Since $\Qmj(W_j^{(m)})=\Qm(W_j^{(m)})\to Q^{(m)}(B)<+\infty$,
we can suppose that there exists a constant $\Lambda_m>0$ such that $\Qmj(W_j^{(m)})\leq
  \Lambda_m$ for all $j$. Therefore, we have also $\Qmj(E_j^{(m)})\leq
\Qmj(W_j^{(m)})\leq \Lambda_m$ for all $j$. Again using the definition of $\Qmj$ we get that
\begin{equation}\label{stQ0}
\Lambda_m \geq \Qmj(E_j^{(m)}) = \frac{\Qmmu(E_j^{(m)}) - \Qmmu(B) + \left(\frac{\asym(E_j^{(m)})}{\amj}-1\right)^2}{\asym(E_j^{(m)})},
\end{equation} 
whence by Lemma \ref{lemma:lowerbound} applied to $\Qmmu$ we obtain
\begin{equation}\label{stQ1}
\Lambda_m \geq \frac{\lambda_{m-1} - \Qmmu(B) + \left(\frac{\asym(E_j^{(m)})}{\amj}-1\right)^2}{\asym(E_j^{(m)})}.
\end{equation} 
From \eqref{stQ1} and thanks to the trivial estimate
$\asym(E_j^{(m)})< 2$ we get for all $j$
\[
\left(\frac{\asym(E_j^{(m)})}{\amj}-1\right)^2 < 2\Lambda_m -
\lambda_{m-1} + \Qmmu(B),
\]
which means that $\left(\frac{\asym(E_j^{(m)})}{\amj}-1\right)^2$ is
uniformly bounded. Since $\amj\to 0$ we immediately infer that
$\asym(E_j^{(m)})\to 0$, as $j$ diverges. But then the sequence
$(E_j^{(m)})_j$ converges to $B$ in $L^1$ and we have by definition of $\Qmmu(B)$
\begin{equation}
  \label{Qmmueps}
\Qmmu(E_j^{(m)}) \geq \Qmmu(B) +o(1).
\end{equation}
Therefore, plugging
\eqref{Qmmueps} into \eqref{stQ0} we obtain after simple calculations
\begin{equation}\label{stQ2}
\left(\frac{\asym(E_j^{(m)})}{\amj}-1\right)^2\leq
\asym(E_j^{(m)})\Lambda_m + o(1) \to 0\qquad \text{as }j\to \infty.
\end{equation} 
We have proved that $E_j^{(m)}\to B$ in $L^1$ and that
$\frac{\asym(E_j^{(m)})}{\amj}\to 1$, as $j$
diverges. The remaining claim follows directly from the definition of
$\Qm(B)$ and from the inequalities
\[
\Qm(E_j^{(m)}) \leq \Qmj(E_j^{(m)}) \leq \Qm(W_j^{(m)}).
\]
\end{proof}

We now state a lemma about the $\Lambda$-minimality and
the regularity of minimizers of $\Qmj$, as $j\to \infty$. 
\begin{lemma}[Regularity]\label{lemma:regconv}
Let $\Qm$ be well-defined and let $\Qm(B)<+\infty$. Then there exists $j_1\in
\N$ such that, for all $j\geq j_1$ and for any minimizer $E^{(m)}_j$
of $\Qmj$, we have that 
\begin{itemize}
\item[(i)] $E^{(m)}_j$ is a $\Lambda$-minimizer of the perimeter, with
  $\Lambda$ uniform in $j$;  

\item[(ii)] $\de E^{(m)}_j$ is of class $C^{1,\eta}$ for any $\eta\in
  (0,1)$; 

\item[(iii)] $\de E^{(m)}_j$ converges to $\de B$ in the $C^1$-topology, as
  $j\to \infty$.  

\end{itemize}
\end{lemma}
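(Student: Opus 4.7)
The plan is to derive (i), (ii), (iii) in that order by feeding the functional $\Qmj$ into the machinery of Sections 3 and 4. The starting point is the identification $\Qmj = \cF_{f_j, g}$ with $g(\alpha) = \alpha^m$ and $f_j(\alpha) = \psi_m(\alpha) + \alpha^{m-1}(\alpha/\amj - 1)^2$ (up to an inessential sign in the $\psi_m$ contribution, exactly as in the proof of existence of minimizers of $\Qmj$). Theorem \ref{teo:lmin} then gives that each $E^{(m)}_j$ is a $\Lambda_j$-minimizer of the perimeter with $\Lambda_j$ of the form \eqref{Lambda}, so (i) reduces to showing that $\Lambda_j$ is uniformly bounded in $j$ for $j$ large.

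For the ``easy'' ingredients of \eqref{Lambda} I would argue as follows. Uniform boundedness of $|\Qmj(E^{(m)}_j)|$ comes from the two-sided estimate $\lambda_m \leq \Qm(E^{(m)}_j) \leq \Qmj(E^{(m)}_j) \leq \Qmj(W^{(m)}_j) = \Qm(W^{(m)}_j) \to \Qm(B) < +\infty$, where the lower bound is Lemma \ref{lemma:lowerbound} applied to $\Qm$, and the upper bound uses the minimality of $E^{(m)}_j$ together with the fact that $W^{(m)}_j$ is a recovery sequence for $\Qm(B)$. Uniform boundedness of $\Per(E^{(m)}_j)$ follows by combining Lemma \ref{lemma:asymm_to_zero} (which yields $\alpha_j := \asym(E^{(m)}_j) \to 0$) with the identity $\defP(E^{(m)}_j) = \psi_m(\alpha_j) + \alpha_j^m \Qm(E^{(m)}_j)$, which forces $\defP(E^{(m)}_j) \to 0$ and hence $\Per(E^{(m)}_j) \to \Per(B)$. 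Finally, $\lip(g)$ on $[0,2]$ is simply $m\cdot 2^{m-1}$.

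The main obstacle is the remaining ingredient, $\lip(f_j)$: a direct computation shows that the derivative of $\phi_j(\alpha) = \alpha^{m-1}(\alpha/\amj - 1)^2$ blows up at points $\alpha$ with $\alpha/\amj$ large, so the \emph{global} Lipschitz constant of $f_j$ on $[0,2]$ is not uniform in $j$. The remedy, in the spirit of the penalization argument of \cite{CicLeo10}, is to observe that only the \emph{local} Lipschitz behavior of $f_j$ near $\alpha_j$ actually enters the proof of Theorem \ref{teo:lmin}: by Lemma \ref{lemma:stimasym}, any compact variation $F$ of $E^{(m)}_j$ in a ball of radius $R$ satisfies $|\asym(F) - \alpha_j| \leq c_n R^n$. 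Choosing $R = R_j$ of order $\amj^{1/n}$ keeps $\asym(F)$ inside a neighborhood of $\alpha_j$ of size $O(\amj)$, and since $\alpha_j/\amj \to 1$ by Lemma \ref{lemma:asymm_to_zero}, the explicit formula $\phi_j'(\alpha) = (m-1)\alpha^{m-2}(\alpha/\amj - 1)^2 + 2\alpha^{m-1}(\alpha/\amj - 1)/\amj$ gives $|\phi_j'(\alpha)| = O(\amj^{m-2}) = O(1)$ for every $m \geq 2$ on this neighborhood. Inserting this local bound in place of the global $\lip(f_j)$ yields $\Lambda$-minimality with uniform $\Lambda$, proving (i).

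With (i) established, (ii) is immediate from Theorem \ref{teo:reg} applied to $\cF_{f_j,g} = \Qmj$. For (iii), Lemma \ref{lemma:asymm_to_zero} gives $E^{(m)}_j \to B$ in $L^1$, and $\de B$ is smooth; Theorem \ref{teo:lminreg}(iii), applied to the uniformly $\Lambda$-minimal sequence $(E^{(m)}_j)_j$ with limit set $B$, then yields that, for $j$ large, $\de E^{(m)}_j$ is the $C^1$-normal graph over $\de B$ of some function $u_j$ with $u_j \to 0$ in $C^1(\de B)$, which is exactly (iii).
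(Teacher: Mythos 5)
Your identification of what needs to be proved is correct, and most of the ``easy'' pieces are fine: the two-sided bound on $\Qmj(E^{(m)}_j)$ via Lemma \ref{lemma:lowerbound} and the recovery sequence, the convergence $\Per(E^{(m)}_j)\to\Per(B)$ via Lemma \ref{lemma:asymm_to_zero}, and the observation that the global Lipschitz constant of $f_j$ on $[0,2]$ blows up while the \emph{local} derivative near $\alpha_j$ stays $O(\alpha_j^{(m)\,m-2})=O(1)$. That last observation is the heart of the matter, and it is also the heart of the paper's argument. The gap is in \emph{how} you enforce this locality.

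You propose to shrink the radius of the test balls, taking $R=R_j$ of order $(\alpha_j^{(m)})^{1/n}\to 0$. But the notion $E\in\QM(R,\Lambda)$ requires a \emph{fixed} $R>0$, and the hypotheses of Theorem \ref{teo:lminreg}, which you then invoke for (iii), require the whole sequence to lie in a single $\QM(R,\Lambda)$ with $R$ and $\Lambda$ both uniform in $j$. With $R_j\to 0$ you only get quasiminimality at a degenerating scale, which is strictly weaker; the comparison argument of Theorem \ref{teo:lminreg}(iii) (Hausdorff convergence of boundaries, representation as a normal graph over $\de B$, $C^1$ convergence) does not go through, so neither (i) in the form asserted nor (iii) is actually obtained. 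The paper resolves this by keeping $R$ fixed and instead splitting according to the size of $|E\difsim F|$: if $|E\difsim F|>\asym(E)$ one bypasses the Lipschitz bound on $f_j$ entirely, since the boundedness of $\Qmj(E^{(m)}_j)$ gives $\defP(E^{(m)}_j)\leq \tilde C\,\asym(E^{(m)}_j)<\tilde C|E\difsim F|$ and an application of the isoperimetric inequality and Bernoulli then yields $\Per(E^{(m)}_j)\leq\Per(F)+C_1|E\difsim F|$ directly; if $|E\difsim F|\leq\asym(E)$, then Lemma \ref{lemma:stimasym} automatically confines $\asym(F)$ to a window of size $O(\asym(E^{(m)}_j))=O(\alpha_j^{(m)})$ around $\alpha_j^{(m)}$, so the local derivative estimate you computed applies without shrinking $R$. (In this second case the paper also replaces $\lip(g)$ by $g'(\asym(E))=m\asym(E)^{m-1}$, and restricts to $m\geq 3$, citing \cite{CicLeo10} for $m=2$; you should adopt that dichotomy and drop the $R_j$ device.) Once (i) holds with a fixed scale, your derivation of (ii) from Theorem \ref{teo:reg} and of (iii) from Theorem \ref{teo:lminreg}(iii) is the same as the paper's reference to the argument of Lemma 3.6 in \cite{CicLeo10}.
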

\begin{proof}
A first attempt to prove (i) could be to directly apply Theorem
\ref{teo:lmin}. In this way, we would prove that $E := E^{(m)}_j$ is
a $\Lambda$-minimizer of the perimeter, but we would also obtain
$\Lambda = \Lambda_j$ dependent on $j$, and this dependence may
degenerate in the (not \textit{a priori} excluded) case $\Qm(B)
= -\infty$.
Therefore, in order to show that $\Lambda$ does
not depend on $j$ we have to deal with the limit case $\Qm(B) =
-\infty$ and, for that, we need a slight refinement of the
computations already performed in the proof of Theorem \ref{teo:lmin}. 
In the following, we assume $m\geq 3$ (the case $m\leq 2$ is treated in
\cite{CicLeo10}). We let $\cF = \Qmj$, that is we set 
\[
f(\alpha) = -\psi_m(\alpha) +
\alpha^{m-1}\left(\frac{\alpha}{\amj}-1\right)^2 
\]
and
\[
g(\alpha) = \alpha^m
\]
in the definition of $\cF = \cF_{f,g}$. Then 
we fix a point $x\in \R^n$ and a compact variation $F$ of $E$
inside $B(x,\frac 12)$. We distinguish the following two cases.
\medskip

\noindent 
In the \textit{first case}, we suppose that 
\begin{equation}
  \label{eq:stimadifsim1}
  |E\difsim F|> \asym(E).
\end{equation}
Being $\cF(E)$ uniformly bounded from above by some constant
$C>0$, and thanks to \eqref{eq:stimadifsim1}, we obtain 
\begin{eqnarray}\label{eq:defasym}
\nonumber \defP(E) &\leq & \psi_{m-1}(\asym(E)) +
\asym(E)^{m-1}(|\Qmmu(B)|+2C)\\ 
&\leq & \tilde C \asym(E)\\ 
\nonumber &< & \tilde C |E\difsim F|
\end{eqnarray}
with $\tilde C$ depending only on $\Qmmu(B), C$ and 
$\psi_{m-1}$. Now, from \eqref{eq:defasym} and by the isoperimetric
inequality in $\R^n$ we derive
\begin{eqnarray}
  \label{eq:LC_1}
\nonumber \Per(E) &\leq & \Per(B) + \Per(B) \tilde C |E\difsim F|\\ 
&\leq & \Per(F) + \Per(B) - \Per(B_F) + C_0|E\difsim F|\\ 
\nonumber &\leq & \Per(F) + C_1|E\difsim F|,
\end{eqnarray}
where the last inequality follows from Bernoulli's inequality, with a
constant $C_1$ that does not depend on $j$.

\medskip

\noindent
In the \textit{second case}, we suppose on the contrary that 
\begin{equation}
  \label{eq:stimadifsim2}
  |E\difsim F|\leq \asym(E)
\end{equation}
and observe that the constant $\Lambda$ arising in the proof of
Theorem \ref{teo:lmin} can be estimated in a more precise way. Indeed, 
since by Lemma \ref{lemma:asymm_to_zero} we have $\Qm(E_j^{(m)})\to\Qm(B)$ as $j\to +\infty$, by the very definition of $\Qm$ and the hypothesis 
$\Qm(B)<+\infty$, we obtain that $\Per(E)$ is bounded by a
dimensional constant and, on recalling \eqref{Lambda}, we get
\begin{eqnarray*}
\Lambda &=&
C_{n,f}+C_{n,g}|\cF(E)|\Per(B)+\frac{4(n-1)}{3n|B|}\Per(E)\\ 
&\leq & C_{n}\left(1+\lip(f) + \lip(g)\cdot |\cF(E)|\right),
\end{eqnarray*}
where $C_n$ is a positive, dimensional constant. 
Observe now that $\lip(g)$ can be replaced by $g'(\asym(E)) =
m\asym(E)^{m-1}$ up to possibly taking a larger constant $C_n$. In fact \eqref{eq:stimadifsim2}, together with Lemma
\ref{lemma:stimasym} and the monotonicities
of $g(\alpha)$ and of $g'(\alpha)$, implies 
\begin{eqnarray*}
|g(\asym(F)) - g(\asym(E))| &\leq & |g(\asym(E)+ c_n|E\difsim F|)
-g(\asym(E))|\\ 
&\leq & c_n g'((1+c_n)\asym(E)) |E\difsim F|\\ 
&=& c_n(1+c_n)^{m-1}\cdot m\asym(E)^{m-1}\cdot |E\difsim F|,
\end{eqnarray*}
with $c_n = \frac{2^{n+2}}{(2^n-1)\omega_n}$. In conclusion, we get
\begin{equation}\label{eq:stimaL}
\Lambda \leq C_{n}(1+\lip(f) + m\asym(E)^{m-1}\cdot |\cF(E)|).
\end{equation}
Now, to show that $\Lambda$ is uniformly bounded in $j$ we only need
to estimate the product 
\[
\asym(E)^{m-1}\cdot |\cF(E)| = \asym(E_j^{(m)})^{m-1} \cdot |\Qmj(E_j^{(m)})|.
\]
We first observe that the assumption $\Qm(B) < +\infty$ implies that
\[
\lim_j \Qmmu(E_j^{(m)}) = \Qmmu(B).
\]
Then, we obtain
the desired estimate by writing $\Qmj(E_j^{(m)})$ in terms of $\Qmmu$ and
recalling that $m-2>0$:
\begin{eqnarray*}
\asym(E)^{m-1} \cdot |\Qmj(E)| &=& \asym(E)^{m-2}\left(\Qmmu(E) -
  \Qmmu(B) + \left(\frac{\asym(E)}{\amj}-1\right)^2\right)\\ 
&\leq &C \asym(E)^{m-2}.
\end{eqnarray*}
Appealing again to Lemma \ref{lemma:asymm_to_zero} we have that $\asym(E_j^{(m)})\to 0$, which, by the estimate above, implies
\[
\lim_j \asym(E_j^{(m)})^{m-1} \cdot |\Qmj(E_j^{(m)})| = 0.
\]
As a result, in this case $\Lambda = \Lambda_j \leq C_2$ for some dimensional
constant $C_2>0$. Thanks to this last estimate and to \eqref{eq:LC_1}, we
conclude that 
\[
\Lambda = \Lambda_j \leq \max(C_1,C_2)
\]
holds, which completes the proof of (i).
\medskip

\noindent
Finally, to prove (ii) and (iii) one can follow the same argument contained
in the proof of Lemma 3.6 in \cite{CicLeo10}. 
\end{proof}

Applying Lemma \ref{lemma:firstvar} and Remark \ref{oss:firstvar}, in
the following proposition we explicitly write the first variation inequality of $\Qmj$ at $E^{(m)}_j$. Regarding the latter 
as a quantitative estimate of the oscillation of the mean curvature of $\partial E_j^{(m)}$, we deduce its limit as $j\to \infty$.

\begin{lemma}\label{lemma:firstvarQ}
Let $\Qm$ be well-defined,
  $\Qm(B)<+\infty$ and $j_1$ as in Lemma \ref{lemma:regconv}.  If $E^{(m)}_j$ minimizes $\Qmj$ then, for all $j\geq j_1$ it holds
\begin{itemize}
\item[(i)] $\de E^{(m)}_j$ has scalar mean curvature $H^{(m)}_j \in
  L^\infty(\partial E^{(m)}_j)$ (with
orientation induced by the inner normal to $E^{(m)}_j$, and
  with $L^\infty$-norm bounded by a constant independent of
  $j$). Moreover, for ${\Hau}^{n-1}$-a.e. $x,y\in \de E^{(m)}_j$, one has 
\begin{equation}
  \label{firstvarQmj}
  |H^{(m)}_j(x) - H^{(m)}_j(y)| \leq \frac{n}{n-1}\,\Delta_j^{(m)}(\asym(E_j^{(m)})),
\end{equation}
where 
$$
\Delta_j^{(m)}(\alpha)=m\alpha^{m-1}|\Qmj(E_j^{(m)})|+|\psi'_m(\alpha)|+
(m-1)\alpha^{m-2}\left(\frac{\alpha}{\amj}-1\right)^2 +
2\frac{\alpha^{m-1}}{\amj}\left|\frac{\alpha}{\amj} -1\right|; 
$$
\item[(ii)] $\lim_j\|H^{(m)}_j-1\|_{L^\infty(\de E^{(m)}_j)}=0$.
\end{itemize}
\end{lemma}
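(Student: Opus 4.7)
The plan for (i) is to identify $\Qmj$ with the functional $\cF_{f_j,g}$ studied in Section 3 upon choosing
\[
f_j(\alpha) = -\psi_m(\alpha) + \alpha^{m-1}\left(\tfrac{\alpha}{\amj}-1\right)^2,\qquad g(\alpha) = \alpha^m,
\]
and then to invoke the $C^1$-refinement of the first variation inequality provided by Remark \ref{oss:firstvar}. This is legitimate since by Lemma \ref{lemma:noball} the value $\asym(E^{(m)}_j)$ stays in an interval $[\beta_j,2)$ on which $f_j$ and $g$ are smooth; a short computation of $g'(\alpha)=m\alpha^{m-1}$ and
\[
f_j'(\alpha) = -\psi_m'(\alpha)+(m-1)\alpha^{m-2}\left(\tfrac{\alpha}{\amj}-1\right)^2+2\tfrac{\alpha^{m-1}}{\amj}\left(\tfrac{\alpha}{\amj}-1\right),
\]
followed by the triangle inequality on $|f_j'(\alpha)|$ recovers exactly the expression of $\Delta_j^{(m)}$. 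The uniform $L^\infty$-bound on $H^{(m)}_j$ will come for free from the uniform $\Lambda$-minimality provided by Lemma \ref{lemma:regconv}(i) and from the dimensional a priori curvature estimate of Theorem 4.7.4 of \cite{Ambrosio97} invoked inside the proof of Lemma \ref{lemma:firstvar}.

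For part (ii) my plan splits into two steps. First, I will check that $\Delta_j^{(m)}(\asym(E^{(m)}_j))\to 0$. Writing $\alpha_j:=\asym(E^{(m)}_j)$, Lemma \ref{lemma:asymm_to_zero} ensures $\alpha_j\to 0$ and $\alpha_j/\amj\to 1$. Consequently $|\psi_m'(\alpha_j)|\to\psi_m'(0)=Q^{(1)}(B)=0$; the two summands of $\Delta_j^{(m)}$ containing $(\alpha_j/\amj-1)$ vanish thanks to the boundedness of $\alpha_j^{m-1},\alpha_j^{m-2}$ (recall $m\ge 2$) and the convergence $\alpha_j/\amj\to 1$; finally, the product $m\alpha_j^{m-1}|\Qmj(E^{(m)}_j)|$ can be rewritten as
\[
m\alpha_j^{m-2}\left(\Qmmu(E^{(m)}_j)-\Qmmu(B)+\left(\tfrac{\alpha_j}{\amj}-1\right)^2\right),
\]
and its vanishing is precisely the content of a computation already performed inside the proof of Lemma \ref{lemma:regconv}. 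Setting $\bar H_j:=\esssup_{\partial E^{(m)}_j} H^{(m)}_j$, item (i) then forces $\|H^{(m)}_j-\bar H_j\|_{L^\infty(\partial E^{(m)}_j)}\to 0$.

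The second step is to identify the limit constant: I claim $\bar H_j\to 1$. The idea is to test the first variation of perimeter against the position vector field $X(x)=x$, whose dilation flow $\phi_t(x)=e^tx$ rescales perimeter by $e^{(n-1)t}$; since $E^{(m)}_j$ has $L^\infty$ mean curvature with respect to the inner normal convention of Lemma \ref{lemma:firstvar}, this yields the identity
\[
\Per(E^{(m)}_j)=-\int_{\partial^* E^{(m)}_j} H^{(m)}_j(x)\,\langle x,\nu^{(m)}_j(x)\rangle\,d\Hau^{n-1}(x),
\]
while the divergence theorem gives $\int_{\partial^* E^{(m)}_j}\langle x,\nu^{(m)}_j\rangle\,d\Hau^{n-1}=-n|E^{(m)}_j|=-n\omega_n$. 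Replacing $H^{(m)}_j$ by $\bar H_j$ up to an error controlled by the oscillation bound of (i) (and using that $|x|$ stays uniformly bounded on $\partial E^{(m)}_j$ thanks to the Hausdorff convergence of Lemma \ref{lemma:regconv}(iii)), and passing to the limit using $\Per(E^{(m)}_j)\to n\omega_n$ (because $\defP(E^{(m)}_j)\to 0$ by Lemma \ref{lemma:asymm_to_zero}), I expect to obtain $n\omega_n=\bar H_\infty\cdot n\omega_n$, whence $\bar H_\infty=1$. Combined with the first step this yields (ii).

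The step I expect to be the main obstacle is precisely the identification $\bar H_\infty=1$: the oscillation estimate of (i) is not enough on its own, since it only forces $H^{(m)}_j$ to be essentially constant on $\partial E^{(m)}_j$ but leaves the value of that constant undetermined; to pin it down one needs an auxiliary identity coupling mean curvature to perimeter and volume, for which the position vector first variation is the natural choice, exploiting crucially both $|E^{(m)}_j|=|B|$ and the vanishing of the isoperimetric deficit.
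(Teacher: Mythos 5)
Your proof of (i) is essentially identical to the paper's: both identify $\Qmj=\cF_{f_j,g}$ and then invoke Lemma \ref{lemma:firstvar} and Remark \ref{oss:firstvar}, with $\Delta_j^{(m)}$ arising from $|g'(\alpha)|\,|\cF(E)|+|f_j'(\alpha)|$ by the triangle inequality; the uniform $L^\infty$ bound on $H^{(m)}_j$ comes, as you say, from the uniform $\Lambda$-minimality of Lemma \ref{lemma:regconv}(i) and the a priori curvature bound in \cite{Ambrosio97}. (Aside: you write $f_j(\alpha)=-\psi_m(\alpha)+\dots$, while the paper's proof states $f_j(\alpha)=+\psi_m(\alpha)+\dots$. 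Given that $\cF_{f,g}=\tfrac{\defP+f}{g}$ and \eqref{def:Qm_psi} has $\defP-\psi_m$, your sign is in fact the consistent one; the paper has a harmless typo that does not affect $\Delta_j^{(m)}$ since only $|f_j'|$ enters.)

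For (ii), the first step — $\Delta_j^{(m)}(\asym(E_j^{(m)}))\to 0$ from Lemma \ref{lemma:asymm_to_zero} and $\psi_m'(0)=Q^{(1)}(B)=0$ — is exactly what the paper does. For the second step, the paper stops short and refers to \cite{CicLeo10}, Lemma 3.7, whereas you supply a self-contained argument: the Minkowski-type first variation identity
\[
\Per(E^{(m)}_j)=-\int_{\partial^* E^{(m)}_j} H^{(m)}_j\,\langle x,\nu^{(m)}_j\rangle\,d\Hau^{n-1}
\]
combined with the divergence theorem, the constraint $|E^{(m)}_j|=\omega_n$, $\Per(E^{(m)}_j)\to n\omega_n$, and the vanishing oscillation forces the essentially-constant value of $H^{(m)}_j$ to tend to $1$. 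This is a correct and rather clean way to pin down the constant, and it buys you independence from \cite{CicLeo10}. Two small remarks: (a) the Minkowski identity for the dilation field is usually stated for $C^2$ boundaries; here $\partial E^{(m)}_j$ is only $C^{1,\gamma}$ with bounded distributional mean curvature, so one should justify the identity by an approximation or integration-by-parts argument (this is standard and unproblematic given the regularity already established, but worth mentioning); (b) the uniform boundedness of $|x|$ on $\partial E^{(m)}_j$ that you invoke comes from Lemma \ref{lemma:regconv}(iii), which is indeed the right ingredient. Overall the proposal is correct and slightly more explicit than the paper at the final step of (ii).
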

\begin{proof}
Let us note that, on choosing $f_j(\alpha)=\psi_m(\alpha)+\alpha^{m-1}\left(\frac{\alpha}{\amj}-1\right)^2$ and $g(\alpha)=\alpha^m$, we have 
$\Qmj=\cF_{f_j,g}$. As a consequence, the proof of $(i)$ easily follows from Lemma \ref{lemma:firstvar} and Remark \ref{oss:firstvar}, by explicitly computing the first derivatives of $f_j$ and $g$. 
Next we point out that, by Lemma
\ref{lemma:firstvar},  
 $\|H^{(m)}_j\|_{L^{\infty}(\partial E_j^{(m)})}\leq
 4\Lambda/(n-1)$. Thanks to Lemma \ref{lemma:asymm_to_zero} and to the
 definition of $\psi_m$, recalling that $Q^{(1)}(B)=0$ we also have that
 $\lim_j\Delta_j^{(m)}(\asym(E_j^{(m)}))=0$, which implies 
\begin{eqnarray}\label{eq:firstvar_osc}
\lim_j\ \esssup_{x,y\in\partial E_j}|H_j(x)-H_j(y)|= 0.
\end{eqnarray}
From this observation and arguing exactly as in \cite{CicLeo10} Lemma
3.7, one can easily complete the proof of (ii).
\end{proof}
\medskip

We finally obtain the proof of the Iterative Selection
Principle. 
\begin{proof}[proof of Theorem \ref{teo:ISP}]
Statements $(i)$ and $(ii)$ follows by Lemma
\ref{lemma:asymm_to_zero}. The proof of statement $(iii)$ is an
elementary consequence of Lemma \ref{lemma:regconv}, while $(iv)$ follows by Lemma \ref{lemma:firstvarQ}.
\end{proof}

\section{Optimal asymptotic lower bounds for the deficit: the $2$-dimensional case}
\label{2D}
As we have seen in the previous section, the Iterative Selection
Principle allows us to set up a recursive procedure for the
computation, for any fixed integer
$m$, of the optimal constants $c_i = Q^{(i)}(B)$ for $i=1,\dots,m$,
such that the estimate 
\[
\defP(E) \geq \sum_{i=1}^m c_i \asym(E)^i + o(\asym(E)^m)
\]
holds true for any set $E\in \Se^n$. We recall that, in any dimension
$n$, $c_1 = Q^{(1)}(B) = 0$ and $0<c_2 = Q^{(2)}(B)<+\infty$. 
The main result of this section is the following:
\begin{teo}\label{teo:THC1}
We have 
\[
c_m = \Qm(B) = \lim_j \Qm(E^{(m)}_j),
\]
where in dimension $n=2$ and for $j$ large enough, $E^{(m)}_j$ is 
an oval, i.e. a member of a one-parameter family of
$2$-symmetric, convex deformations of the disk, with boundary of
class $C^1$ and formed by two pairs of congruent arcs of circle. 
\end{teo}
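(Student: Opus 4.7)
The identity $c_m = Q^{(m)}(B) = \lim_j Q^{(m)}(E^{(m)}_j)$ is immediate: by definition of $c_m$ as the coefficient appearing in \eqref{intro:taylor}, one has $c_m = Q^{(m)}(B)$, and property $(ii)$ of the Iterative Selection Principle gives the convergence along the selected sequence. All the work therefore concerns the structural claim in dimension $n = 2$.

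For this, I would combine properties $(iii)$ and $(iv)$ of Theorem \ref{teo:ISP} with Remark \ref{oss:firstvar}. Property $(iv)$ guarantees that $H^{(m)}_j$ is uniformly close to $1$ on $\partial E^{(m)}_j$, and Remark \ref{oss:firstvar} tells us that the mean curvature is \emph{locally constant} on each free region, i.e. on each connected component of $\R^2\setminus\bigcup_{x\in\optcen(E^{(m)}_j)}(x+\partial B)$. Since in $\R^2$ a constant-curvature curve is a circular arc, it follows that $\partial E^{(m)}_j$ is made of finitely many circular arcs of radii close to $1$, whose endpoints lie on the circles $x+\partial B$ with $x\in\optcen(E^{(m)}_j)$; property $(iii)$ forces these arcs to meet tangentially (so the boundary is globally $C^1$) and guarantees convexity of $E^{(m)}_j$ for $j$ large, because the arcs have curvature near $+1$ and cannot be oriented inward without violating the $C^1$-proximity to $\partial B$.

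The remaining point is to show that the number of optimal centers equals two and that the boundary consists of exactly two pairs of congruent arcs. I would get this through Bonnesen's annular symmetrization applied to the minimizer. Given an optimal ball $z+B$, this symmetrization rearranges $E^{(m)}_j$ within each spherical shell centered at $z$ to be symmetric with respect to a chosen axis through $z$: it preserves the volume, preserves convexity in the class produced by Step~2, does not increase the perimeter, and leaves the asymmetry $\asym(E^{(m)}_j)$ unchanged because the same ball $z+B$ is still optimal after symmetrization. Consequently the penalized functional $Q^{(m)}_j$, which only depends on $\defP$ and on $\asym$, does not increase. By the minimality of $E^{(m)}_j$ for $\Qmj$ the set must coincide with its annular symmetrizations. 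Iterating this argument with respect to a second optimal center (whose existence is forced once a first axis of symmetry is fixed, combined with the $C^1$-structure of the boundary) yields a second orthogonal axis of symmetry, and the resulting four-arcs-in-two-congruent-pairs structure is precisely the definition of an oval.

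The delicate point, and the main obstacle, is the third step: one must verify carefully that Bonnesen's annular symmetrization, which classically is designed for convex curves in $\R^2$, applies to the restricted class of sets produced by Step~2, preserves the asymmetry exactly (not just up to a constant), and is strictly perimeter-decreasing unless the set is already symmetric with respect to the chosen axis. One also has to rule out the degenerate configuration in which a single optimal ball coincides with $B$ and there is no second axis available; this is where the positivity of $\asym(E^{(m)}_j)$ in property $(i)$ and the explicit description of the admissible configurations carried out in Section \ref{2D} become essential.
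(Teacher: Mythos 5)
Your first paragraph (the identity $c_m = Q^{(m)}(B) = \lim_j Q^{(m)}(E^{(m)}_j)$) and your second paragraph (constant curvature on each free region, hence a union of circular arcs meeting tangentially, hence a convex $C^1$ set for $j$ large) agree with the paper. You also correctly identify Bonnesen's annular symmetrization as the tool for the final reduction. But the way you deploy it contains genuine gaps, and one of the key structural facts is used incorrectly.

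First, you assert that ``a second optimal center is forced once a first axis of symmetry is fixed'' and you propose to iterate the symmetrization about two distinct optimal centers. This is wrong, and in fact the paper's argument goes in the opposite direction: a crucial step (Lemma \ref{lemma:unico}) is that a \emph{strictly convex} set has a \emph{unique} optimal center. That uniqueness is what makes the free-region picture clean: after translating so that $\optcen(E^{(m)}_j)=\{0\}$, the free regions are exactly $B$ and $\R^2\setminus\overline B$, and Remark \ref{oss:firstvar} then places $E^{(m)}_j$ in the class $\cP(k)$ for some $k\geq 2$ — a $k$-fold symmetric set whose boundary is made of $2k$ arcs of exactly two radii, the larger-curvature arcs outside $B$ and the smaller-curvature arcs inside. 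Without pinning down this $\cP(k)$ structure your argument never actually quantifies how many arcs the boundary has, and an ``oval'' is by definition the case $k=2$.

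Second, your claim that ``by minimality the set must coincide with its annular symmetrizations'' does not, by itself, deliver the conclusion: taken literally (symmetry about every axis through the center) it would force a disk, contradicting $\asym(E^{(m)}_j)>0$. What the paper actually proves is a strict perimeter inequality for $k>2$: passing to polar coordinates about the unique optimal center and using the scaling of the angular variable $\theta_2(\rho)=\tfrac k2\,\theta_k(\rho)$, one gets
\[
P(E^{as}) = 4\int_{\rho_1}^{\rho_2}\sqrt{1+\tfrac{k^2}{4}\rho^2(\theta_k')^2}\,d\rho
\;<\; 2k\int_{\rho_1}^{\rho_2}\sqrt{1+\rho^2(\theta_k')^2}\,d\rho = P(E)
\]
for $k\geq 3$. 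This inequality is elementary and tailored to $\cP(k)$; it neither requires nor invokes the full Bonnesen theorem. Moreover, you take for granted that ``the same ball $z+B$ is still optimal after symmetrization''; the paper has to \emph{prove} that $E^{as}$ is strictly convex (a separate curvature computation, \eqref{TCH1:convexity}), apply Lemma \ref{lemma:unico} to get uniqueness of the optimal center for $E^{as}$, and then use the symmetry of $E^{as}$ together with volume preservation of the symmetrization to conclude $\asym(E^{as})=\asym(E)$. These verifications are precisely what you flag as ``the delicate point'' but then leave open; they are the heart of the argument, not an afterthought.
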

One can see a picture of an \textit{oval} in Figure \ref{fig:Ej2}. 
Since the isoperimetric deficit and the asymmetry of an oval
can be explicitly computed, we obtain Corollary \ref{cor:ovalok}
below, that is a generalization of Theorem 4.6 in \cite{CicLeo10} and
thus of previous results obtained for convex sets by Hall, Hayman and
Weitsman in \cite{HalHayWei91, HalHay93, Hall92}, by
Campi in \cite{Campi92} and by Alvino, Ferone and Nitsch in \cite{AlvFerNit11}.  
\begin{corol}\label{cor:ovalok}
Assume that the estimate
\[
\defP(E) \geq \sum_{i=2}^m c_i \asym(E)^i + o(\asym(E)^m)
\]
is valid for ovals. Then, it is valid for all measurable sets in $\R^2$.
\end{corol}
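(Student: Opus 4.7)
The plan is to establish Corollary~\ref{cor:ovalok} by induction on $m$, with Theorem~\ref{teo:THC1} as the essential ingredient. For the base case $m=2$, the oval hypothesis $\defP(E)\geq c_2\asym(E)^2+o(\asym(E)^2)$ translates to $Q^{(2)}(E)\geq c_2+o(1)$ along ovals approaching $B$. Theorem~\ref{teo:THC1} at $m=2$ produces an oval recovery sequence realizing $Q^{(2)}(B)$, forcing $Q^{(2)}(B)\geq c_2$. Since by definition $Q^{(2)}(B)\leq\liminf_k Q^{(2)}(F_k)$ for every admissible $F_k\to B$, this upgrades to $\defP(E)\geq c_2\asym(E)^2+o(\asym(E)^2)$ for every $E\in\Se^2$ with $\asym(E)\to 0$; the regime $\asym(E)\geq\eta>0$ is immediate from the classical sharp quantitative isoperimetric inequality.

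For the inductive step $m\geq 3$, assume the corollary at orders $2,\dots,m-1$, so that the sharp expansion
\[
\defP(E)\ \geq\ \sum_{i=2}^{m-1}Q^{(i)}(B)\,\asym(E)^i + o(\asym(E)^{m-1})
\]
holds for every $E\in\Se^2$, and in particular $Q^{(i)}(B)\in\R$ for $i=1,\dots,m-1$. Let $j_0=\min\{i<m:c_i\neq Q^{(i)}(B)\}$, with the convention $j_0=m$ if the set is empty. I dispatch three cases. If $c_{j_0}>Q^{(j_0)}(B)$, I truncate the oval hypothesis at order $j_0$: since $c_i=Q^{(i)}(B)$ for $i<j_0$ by minimality of $j_0$, the definition of $Q^{(j_0)}$ gives $Q^{(j_0)}(E)\geq c_{j_0}+o(1)$ along ovals approaching $B$, and Theorem~\ref{teo:THC1} at order $j_0$ then forces $Q^{(j_0)}(B)\geq c_{j_0}$, a contradiction. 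If $c_{j_0}<Q^{(j_0)}(B)$, the inductive sharp expansion provides the strictly positive surplus $(Q^{(j_0)}(B)-c_{j_0})\asym(E)^{j_0}$, which dominates both the correction $c_m\asym(E)^m$ and the residual $o(\asym(E)^{m-1})$ as $\asym(E)\to 0$, so the desired estimate already follows from the inductive hypothesis.

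It remains to treat the sharp case $j_0=m$, i.e.\ $c_i=Q^{(i)}(B)$ for all $i<m$. Then $\psi_m(\asym)=\sum_{i=2}^{m-1}c_i\asym^i$, so the oval hypothesis reads $Q^{(m)}(E)\geq c_m+o(1)$ along ovals approaching $B$. Theorem~\ref{teo:THC1} yields an oval recovery sequence $E_j^{(m)}$ with $Q^{(m)}(B)=\lim_j Q^{(m)}(E_j^{(m)})$, hence $Q^{(m)}(B)\geq c_m$. The defining inequality $Q^{(m)}(B)\leq\liminf_k Q^{(m)}(F_k)$ on admissible $F_k\to B$ then gives $\liminf_{E\to B}Q^{(m)}(E)\geq c_m$, which unwinds to $\defP(E)\geq\sum_{i=2}^{m}c_i\asym(E)^i+o(\asym(E)^m)$ for every $E\in\Se^2$ with small asymmetry; the remaining range $\asym(E)\geq\eta>0$ is again controlled by the classical sharp quantitative isoperimetric inequality. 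The main obstacle is exactly this case analysis, which links the (a priori arbitrary) coefficients $c_i$ with the sharp constants $Q^{(i)}(B)$ and disentangles the lower-order slack; once this is handled, the argument is a clean combination of the Iterative Selection Principle with the lower semicontinuity built into the definition of $Q^{(m)}$.
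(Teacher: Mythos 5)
Your proof is correct and rests on the same essential ingredient as the paper's: Theorem~\ref{teo:THC1}, which identifies $Q^{(m)}(B)$ with a limit along ovals, is what lets you transfer estimates proved on ovals to all sets via the lower semicontinuity built into the definition of $Q^{(m)}(B)$. The paper simply asserts the Corollary is ``an immediate consequence'' of Theorem~\ref{teo:THC1}; you actually supply the argument, and the trichotomy on $j_0=\min\{i<m: c_i\neq Q^{(i)}(B)\}$ is precisely the step one needs to handle coefficients $c_i$ that are \emph{a priori} arbitrary rather than sharp. That is a genuine contribution relative to the terse justification in the text, not a different route.

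Two minor remarks on the write-up. First, the outer induction on $m$ is vestigial: in none of your three cases do you actually invoke the Corollary at a lower order --- the ``inductive sharp expansion'' $\defP(E)\geq\sum_{i<m}Q^{(i)}(B)\asym(E)^i+o(\asym(E)^{m-1})$ is a direct consequence of the definition of $Q^{(m-1)}(B)$ (given finiteness of the lower $Q^{(i)}(B)$), not of the Corollary itself. So the argument is really a direct one for each fixed $m$. Second, in the case $c_{j_0}<Q^{(j_0)}(B)$ the surplus $(Q^{(j_0)}(B)-c_{j_0})\asym^{j_0}$ must dominate not only $c_m\asym^m$ but all of $c_{j_0+1}\asym^{j_0+1},\dots,c_m\asym^m$ together with the $o(\asym^{m-1})$ error; this does hold since every such term is $o(\asym^{j_0})$, but your phrasing names only $c_m\asym^m$. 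Finally, your proof (like the paper's) silently assumes $Q^{(i)}(B)\in\R$ for the relevant $i$, so that $Q^{(m)}$ is well-defined and Theorem~\ref{teo:THC1} applies; this is the same implicit hypothesis the paper carries, so it is not a gap you introduced.
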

The proof of Corollary \ref{cor:ovalok} is an immediate consequence of
Theorem \ref{teo:THC1}. 
\medskip

\noindent
Following \cite{Bonnesen29}, we now introduce a tool that will be used in the proof of Theorem \ref{teo:THC1}. Given $E\in\Se^2$, fix a line $l$ and a point $x$ on $l$. For any $r>0$, consider $\partial B(x,r)$ and let 
$\lambda(r)=P(\partial B(x,r)\cap E)$. On $\partial B(x,r)$ take two opposite arcs, each of length $\frac{\lambda(r)}{2}$, so that $l$ passes through the 
midpoint of both arcs. The set obtained as the collection of all such arcs, when $r$ varies in $(0,+\infty)$ is called the {\it Bonnesen annular symmetrized set} of $E$ and in what follows it will be denoted by $E^{as}$. 
As an elementary property of the annular symmetrization, we have
that for all $r>0$,  
\begin{eqnarray*}
|E\cap B(x,r)|=|E^{as}\cap B(x,r)|,
\end{eqnarray*}
which in particular implies $|E|=|E^{as}|$. Another relevant, though
elementary, property of the annular symmetrization is the following
\begin{teo}[Bonnesen, 1924]
Let $E$ be a convex set and let $r\leq R$ be, respectively, the inner
and outer radius of the annulus $C_{r,R}(x)$ centered in $x$,
containing $\de E$, and having minimal width $R-r$. Then, if $E^{as}$
is an annular symmetrization of $E$ centered at $x$ with respect to
some line through $x$, one has $\Per(E^{as}) \leq \Per(E)$ with
equality if and only if $E^{as} = E$.
\end{teo}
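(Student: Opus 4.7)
The plan is to work in polar coordinates centered at $x$ (taking $x=0$ and $l$ the $y$-axis, without loss of generality). After a standard approximation by smooth, strictly convex sets, the inclusions $B(0,r)\subset E\subset B(0,R)$ together with the convexity of $E$ show that $E$ is star-shaped from $0$ and admits a smooth radial representation $E=\{(\rho,\theta):\rho\leq\rho_E(\theta)\}$ with $\rho_E\colon\Suno\to[r,R]$. Setting $\lambda(\rho)=\Hau^1(E\cap\de B(0,\rho))=\rho\,\Leb^1(\{\rho_E\geq\rho\})$ and $\phi(\rho)=\lambda(\rho)/(4\rho)$, I observe that $\phi$ is non-increasing, decreasing from $\pi/2$ at $\rho=r$ to $0$ at $\rho=R$, and that $E^{as}$ is itself star-shaped from $0$ with boundary made of the four congruent arcs $\theta=\pm\pi/2\pm\phi(\rho)$, $\rho\in[r,R]$.

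I would then express both perimeters via the co-area formula in the radial variable. A direct computation yields
\begin{equation*}
\Per(E^{as})=4\int_r^R\!\sqrt{1+\rho^2\phi'(\rho)^2}\,d\rho,\qquad\Per(E)=\int_r^R\sum_{i=1}^{N(\rho)}\sqrt{1+y_i(\rho)^2}\,d\rho,
\end{equation*}
where $N(\rho)=\#\{\theta:\rho_E(\theta)=\rho\}$ and $y_i(\rho)=\rho/|\rho_E'(\theta_i)|$ at each such $\theta_i$. Differentiating the identity $\Leb^1(\{\rho_E\geq\rho\})=\lambda(\rho)/\rho$ via the co-area formula gives the crucial equality
\begin{equation*}
\sum_{i=1}^{N(\rho)}y_i(\rho)\;=\;\frac{\lambda(\rho)}{\rho}-\lambda'(\rho)\;=\;-4\rho\,\phi'(\rho)\;\geq\;0.
\end{equation*}

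The core of the proof is the pointwise inequality, valid for a.e.\ $\rho\in(r,R)$,
\begin{equation*}
\sum_{i=1}^{N(\rho)}\sqrt{1+y_i^2}\;\geq\;\sqrt{N(\rho)^2+\Bigl(\sum_i y_i\Bigr)^{\!2}}\;\geq\;\sqrt{16+16\rho^2\phi'(\rho)^2}\;=\;4\sqrt{1+\rho^2\phi'(\rho)^2},
\end{equation*}
which, once integrated, yields $\Per(E)\geq\Per(E^{as})$. The first step is Jensen's inequality applied to the convex function $f(y)=\sqrt{1+y^2}$ together with the identity for $\sum y_i$ above; the second step uses the lower bound $N(\rho)\geq 4$. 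The latter is precisely where the minimal-width hypothesis enters: Bonnesen's classical characterization of the \emph{minimal} annulus for a convex body asserts that $\de E$ must touch $\de B(0,r)$ at least twice and $\de B(0,R)$ at least twice, with the contact points alternating along $\de E$; this guarantees at least four monotone arcs of $\rho_E$, each of which crosses every intermediate level $\rho\in(r,R)$ exactly once, hence $N(\rho)\geq 4$ a.e.

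For the equality case, $\Per(E)=\Per(E^{as})$ forces equality in Jensen at a.e.\ $\rho$ (so all $y_i(\rho)$ coincide) and $N(\rho)=4$; combined with the alternating-contact structure and the convexity of $E$, this forces $\rho_E$ to agree with $\rho^{as}$ and share its $D_2$-symmetry about $l$ and its perpendicular, yielding $E=E^{as}$. The main obstacle in this plan is the justification of $N(\rho)\geq 4$ a.e., which requires invoking Bonnesen's alternating-contact lemma for the minimal annulus; without the minimality of the width (e.g., for an arbitrary enclosing annulus where $N(\rho)$ can equal $2$), the pointwise inequality, and hence the conclusion, can fail.
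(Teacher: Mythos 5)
The paper does not actually prove this theorem. It states it as a classical result and, in the paragraph immediately following, remarks that the non-elementary step is showing that for the minimal annulus both $\de B(x,r)$ and $\de B(x,R)$ meet $\de E$ in at least two points; it then explicitly declines to use Bonnesen's result, opting instead to prove ``a much more elementary property of the annular symmetrization restricted to a special class of sets,'' namely the classes $\cP(k)$. So there is no in-paper proof to compare against line by line.

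Your proposal is a genuine and (modulo standard approximation technicalities) correct proof of the full statement, and it is structurally the general-convex-set analogue of the simpler argument the paper later runs for $E\in\cP(k)$ in the proof of Theorem \ref{teo:THC1}: parameterize $\de E$ radially, write both perimeters as $\int\sqrt{1+\rho^2\theta'^2}\,d\rho$ over the level sets $\{\rho_E=\rho\}$, and compare integrands pointwise. The difference is that for a general convex set the multiplicity $N(\rho)$ and the slopes $y_i(\rho)$ are not forced by symmetry, so you need two extra inputs: the vector/triangle inequality $\sum_i\sqrt{1+y_i^2}\geq\sqrt{N^2+(\sum_i y_i)^2}$ (your ``Jensen'' step) to collapse the heterogeneous $y_i$'s, and the bound $N(\rho)\geq 4$, which is where the classical alternating-contact lemma for the minimal annulus is irreducibly needed. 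In $\cP(k)$ these are both free: $N(\rho)=2k$ and all $y_i$ coincide by the $k$-fold symmetry, so the paper's inequality reduces to the identity $\theta_2=\tfrac{k}{2}\theta_k$ and the scalar comparison $4\sqrt{1+\tfrac{k^2}{4}\rho^2\theta_k'^2}<2k\sqrt{1+\rho^2\theta_k'^2}$ for $k\geq 3$, with no Jensen step and no alternating-contact lemma. Your closing remark---that the minimal-width hypothesis (via the alternating-contact lemma) is the exact place where elementary arguments stop working---coincides with the paper's own comment. One small point worth tightening: arcs of $\de E$ lying on circles centered at $x$ contribute to $\Per(E)$ but vanish from your $\int_r^R\sum_i\sqrt{1+y_i^2}\,d\rho$; the smooth strictly convex approximation you invoke should be chosen (or perturbed) so that no such radial plateau occurs, or the atomic part of $\lambda'$ has to be accounted for separately.
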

The proof of this theorem is not completely elementary, as one must
show that if $x$, $r$ and $R$ are the parameters defining the optimal
annulus $C_{r,R}(x)$, then both $\de B(x,r)$ and $\de B(x,R)$
intersect $\de E$ in at least two distinct points (this property is
crucial to show that the perimeter does not increase after the
symmetrization). Moreover, this symmetrization is not closed in the
class of convex sets, i.e. it does not preserve convexity in general
(see \cite{Campi92}). 
However, we shall not use Bonnesen's result but prove instead a much
more elementary property of the annular symmetrization restricted to a
special class of sets, on which it preserves area, smoothness and also
convexity, while not increasing the perimeter. To this end, for any
integer $k\geq 2$, we start defining a special class $\cP(k)$ of sets as follows: we say that
a set $E\subset\Se^2$ belongs to $\cP(k)$ if 
\begin{figure}[ht]
  \centering
  \includegraphics[scale=.9]{oval.pdf}\qquad
  \includegraphics[scale=.9]{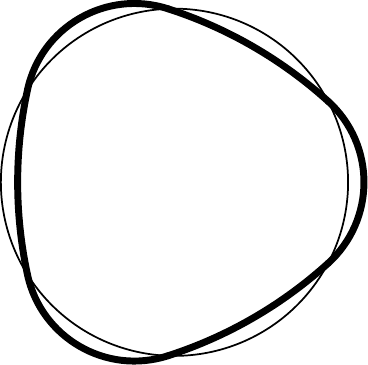}\qquad
  \includegraphics[scale=.9]{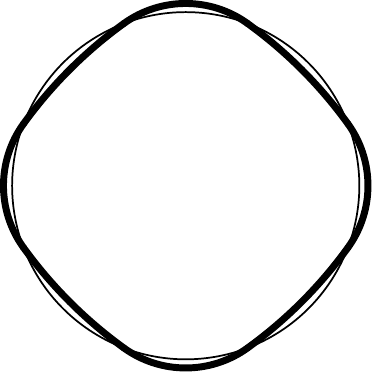}
  \caption{Three examples of
    set belonging to $\cP(2), \cP(3)$ and $\cP(4)$ (from left to
    right). The one on the left is an oval.}
  \label{fig:Ej2}
\end{figure}
\begin{itemize}
\item $|E| = |B|$;

\item $\de E$ is of class $C^1$;

\item there exist two constants $0<h_2<1< h_1$ such that, $\de
  E\setminus B$ is a union of $k$ congruent arcs of circle with
  curvature $h_1$, and similarly $\de E \cap B$ is a union of $k$ congruent
  arcs of circle with curvature $h_2$.
\end{itemize}
Moreover, the elements of the class $\cP(2)$ are called
\textit{ovals}. Some sets belonging to $\cP(k)$ for $k=2,3,4$ are depicted
in Figure \ref{fig:Ej2}.

If $E\in \cP(k)$ then, up to a rotation, its boundary $\partial E$ can be
parameterized by the angular coordinate $\theta\in[0,2\pi]$ as a curve
$\gamma_k:[0,2\pi]\to \R^2$ enjoying the following properties:
\begin{itemize}
\item[(i)] there exists $\beta\in (0,\frac{\pi}{k})$ such that,
  if $\overline\gamma_k$ denotes the restriction of $\gamma_k$
  to the interval $[-\beta,\frac{2\pi}{k}-\beta]$, one has
\begin{eqnarray}\label{THC1:curve}
\overline\gamma_k(\theta)=
\begin{cases}
C_{1}+{\frac{1}{h_1}}(\cos\theta,\sin\theta)&\text{ if }
\theta\in[-\beta,\beta)\\ 
C_{2}+{\frac{1}{h_2}}(\cos\theta,\sin\theta)&\text{ if }
\theta\in[\beta,\frac{2\pi}{k}-\beta] 
\end{cases}
\end{eqnarray}
where 
\begin{eqnarray*}
C_{1}&=&\left(\frac{1}{h_1}\cos(\arcsin(h_1\sin\beta)) -
\cos{\beta}\right)(1,0), \\ 
C_{2}&=&-\left(\frac{1}{h_2}\cos(\arcsin(h_2\sin\beta)) -
\cos{\beta}\right)\left(\cos\frac{2\pi}{k},\sin\frac{2\pi}{k}\right)
\end{eqnarray*}
are the centers of the two arcs of $\overline\gamma_k$ with curvatures
$h_1$ and $h_2$ respectively (see Figure \ref{fig:Ej}.

\item[(ii)] Denoting for any $\theta\in[0,2\pi]$ by $R(\theta)\in SO(2)$ the counterclockwise rotation of angle $\theta$ around the origin, then for all $l\in\{1,\dots,k-1\}$ 
\begin{eqnarray*}
\gamma_k\left(\theta+l\frac{2\pi}{k}\right)=R\left(l\frac{2\pi}{k}\right)\overline\gamma_k(\theta).
\end{eqnarray*}
\end{itemize}
\begin{figure}[h!]
  \centering
  \includegraphics[scale=.36]{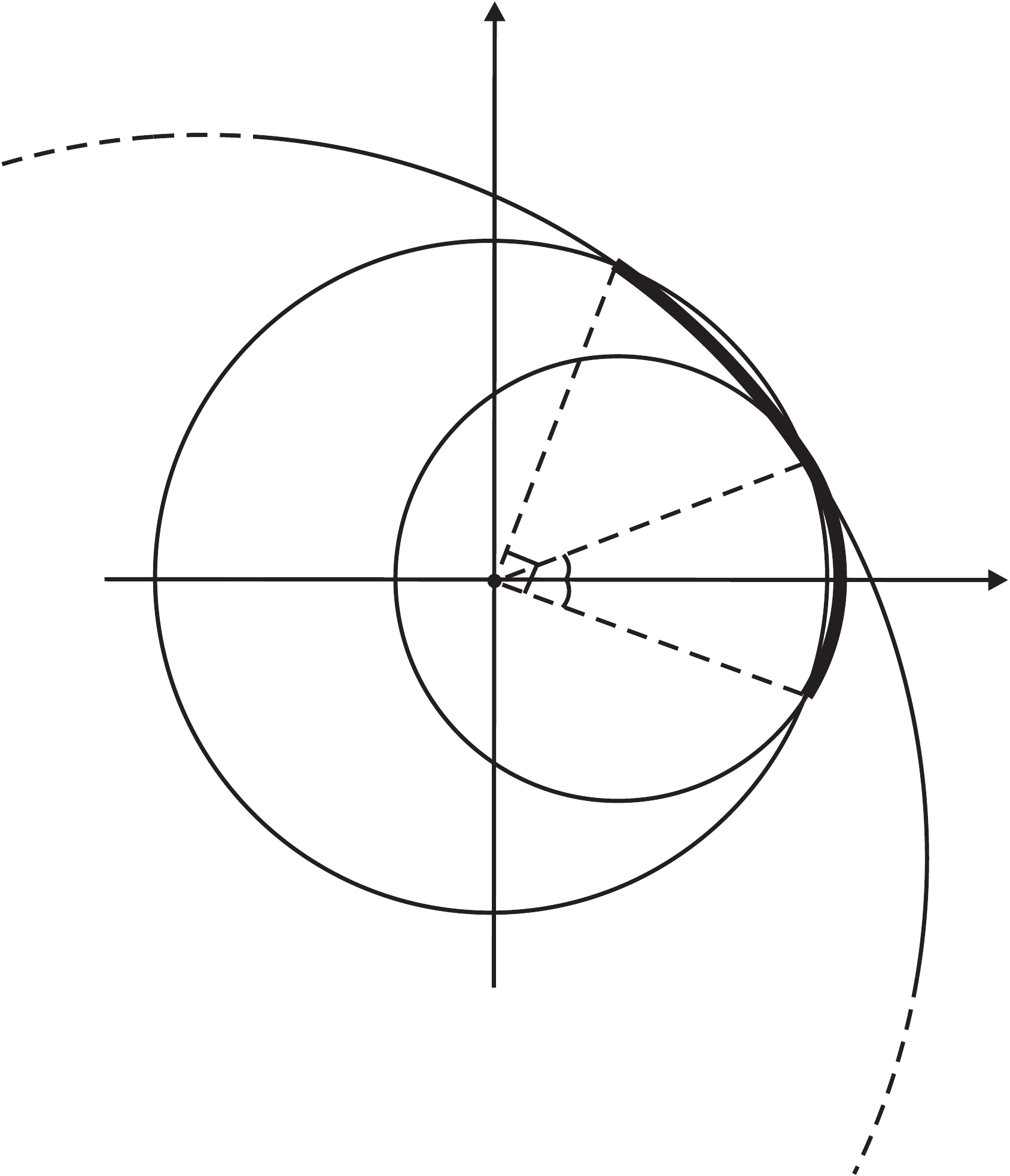} 
  \caption{Parameterization of $1/k$ of $\de E$ (bold line) for $k=4$. Here
    $B_1=B(C_1,\frac{1}{h_1})$ and
    $B_2=B(C_2,\frac{1}{h_2})$ }
  \label{fig:Ej}
  \begin{picture}(0,0)
\put(20,150,5){\tiny $-\beta$}\put(26,160,5){\tiny $\beta$}
\put(60,176,5){$\partial E$}\put(-34,176,5){$\partial B_1$}\put(-100,216,5){$\partial B_2$}
\put(-76,163,5){$\partial B$}
\end{picture}

\end{figure}

\medskip
Before proceeding with the proof of Theorem \ref{teo:THC1}, we prove a
lemma on the uniqueness of the optimal center for a strictly convex
set, valid in any dimension $n$. We recall that $x\in \R^n$ is an
optimal center for $E\in \Se^n$ if $|B_E|\asym(E) = |E\difsim
(x+B_E)|$, and that the set of all optimal centers for $E$ is denoted by
$\optcen(E)$. 
\begin{lemma}\label{lemma:unico}
Let $E$ be a strictly convex set in $\R^n$. Then the optimal center of
$E$ is unique.
\end{lemma}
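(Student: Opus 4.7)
The plan is to show that the optimal center of $E$, namely any maximizer of the function $\psi(x):=|E\cap B(x,r)|$ (where $r>0$ is chosen so that $|B(0,r)|=|E|$), is unique when $E$ is strictly convex. First I would invoke Brunn's concavity principle---applied to the convex subset $\{(x,y)\in\R^{2n}:y\in E,\ y-x\in B(0,r)\}$---to obtain that $\psi^{1/n}$ is concave, so in particular the set of maximizers is convex. Assuming toward a contradiction that $x_1\neq x_2$ are distinct maximizers, with $x_3:=(x_1+x_2)/2$ also optimal and $A_i:=E\cap B(x_i,r)$ convex, the convexity of $E$ and of the balls gives the inclusion $\tfrac{1}{2}(A_1+A_2)\subset A_3$, and the Brunn--Minkowski inequality combined with the optimality of $x_3$ forces equality. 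The equality case then produces a translation vector $v\in\R^n$ with $A_2=A_1+v$; using that $A_1$ is convex one has $\tfrac12(A_1+A_2)=A_1+v/2$, and since $A_3\supseteq A_1+v/2$ with the same volume, the two convex sets share the same interior: $\mathrm{int}(A_3)=\mathrm{int}(A_1)+v/2$.

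The heart of the proof is then to rule out $v\neq 0$. Assuming $\asym(E)>0$ (otherwise $E$ coincides with $B_E$ up to translation and uniqueness is immediate), the boundary $\de E$ must cross the open ball $B(x_1,r)$, and I can fix $p\in\de E\cap B(x_1,r)\subset\de A_1$. If $v\neq 0$, then $p+v\in A_2\subset E$ and strict convexity of $E$ forces the open segment $(p,p+v)$ into $\mathrm{int}(E)$; in particular $p+v/2\in\mathrm{int}(E)$. A quick check, using that the function $\epsilon\mapsto|p+\epsilon v-x_\epsilon|^2$ (with $x_\epsilon:=(1-\epsilon)x_1+\epsilon x_2$) is a convex quadratic and is bounded above by $r^2$ at both endpoints $\epsilon=0$ and $\epsilon=1$, then gives $p+v/2\in\mathrm{int}(B(x_3,r))$. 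Combining, $p+v/2\in\mathrm{int}(A_3)=\mathrm{int}(A_1)+v/2$, which forces $p\in\mathrm{int}(A_1)$ and contradicts $p\in\de A_1$.

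The remaining case $v=0$ gives $A_1=A_2$, hence $E$ is disjoint from the lune $B(x_1,r)\difsim B(x_2,r)$. Since $E$ is convex and connected, and the complement of the lune decomposes into the lens $B(x_1,r)\cap B(x_2,r)$ and the exterior of both balls, $E$ must lie entirely in one of these two components: the lens is ruled out by the strict volume inequality $|\text{lens}|<|B_E|=|E|$ (valid for $x_1\neq x_2$), while containment in the exterior would yield $\asym(E)=2$, which is excluded. The main obstacle I foresee is the rigorous passage from the \emph{measure-theoretic} equality in Brunn--Minkowski to a \emph{set-theoretic} translation between the $A_i$, together with the identification of their interiors; both rest on the elementary but crucial fact that two convex sets of equal measure, one contained in the other, must share the same interior.
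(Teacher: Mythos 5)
Your proof follows the same core strategy as the paper --- Brunn--Minkowski applied to the intersections $A_i = E\cap B(x_i,r)$, together with its equality case, to deduce that the intersections are translates of one another --- but the endgame is genuinely different and considerably more careful. The paper, having obtained the translation, simply asserts ``flatness of $\de E\cap(x_\lambda+B)$ in the direction $x_2-x_1$'' and declares a contradiction with strict convexity; this implicitly assumes the translation vector is nonzero and that an actual segment of $\de E$ can be exhibited, neither of which is spelled out. Your midpoint argument is cleaner: rather than locating a flat piece of $\de E$, you take a single boundary point $p\in\de E\cap B(x_1,r)$, use strict convexity only to place the midpoint $p+v/2$ in $\mathrm{int}(E)$, the convex-quadratic estimate to place it in the open ball $B(x_3,r)$, and the identity $\mathrm{int}(A_3)=\mathrm{int}(A_1)+v/2$ to force $p\in\mathrm{int}(A_1)$, contradicting $p\in\de A_1$. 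You also isolate the case $v=0$, which the paper never mentions. The one inaccuracy is there: the complement of the lune $B(x_1,r)\difsim B(x_2,r)$ is in fact \emph{connected} --- the lens $B(x_1,r)\cap B(x_2,r)$ and the exterior of the two balls meet along the rim $\de B(x_1,r)\cap\de B(x_2,r)$ --- so it does not ``decompose into two components'' as stated. The repair is routine: $\mathrm{int}(E)$ is open and avoids the open lune, hence cannot contain any rim point (every neighbourhood of a rim point meets the open lune), so $\mathrm{int}(E)$ lies in the disjoint union of the two \emph{open} sets $B(x_1,r)\cap B(x_2,r)$ and $\R^n\setminus\left(\overline{B(x_1,r)}\cup\overline{B(x_2,r)}\right)$; connectedness then places it in exactly one of them, and your volume/asymmetry argument finishes as written. (The initial appeal to Brunn's concavity principle is not needed --- the convexity of the set of optimal centers already follows from the Brunn--Minkowski step, as in the paper --- but it does no harm.)
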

\begin{proof}
Without loss of generality, we assume $B_E = B$. Arguing by
contradiction, let $x_1,\ x_2\in\optcen(E)$ with $x_1\not
=x_2$. By the very definition of Fraenkel asymmetry we have that, for $i\in\{1,2\}$,  
\begin{eqnarray}\label{THC1:optimality}
|E\cap (x_i+B)|=|B|\left(1-\frac{\asym(E)}{2}\right).
\end{eqnarray}
We now set, for $\lambda\in[0,1]$,  $x_{\lambda}=\lambda
x_1+(1-\lambda)x_2$ and we observe that 
\begin{equation*}
E\cap(x_\lambda+B)\supseteq \lambda\left(E\cap(x_1+B)\right)+(1-\lambda)\left(E\cap(x_2+B)\right).
\end{equation*} 
Since $E\cap(x_\lambda+B)$ is a convex set, we can now exploit the
Brunn-Minkowski inequality (see for example \cite{BurZal88}) together with \eqref{THC1:optimality} to get
\begin{eqnarray*}\label{THC1:BM}
|E\cap(x_\lambda+B)|^{\frac 1n}&\geq& |\lambda\left(E\cap(x_1+B)\right)+(1-\lambda)\left(E\cap(x_2+B)\right)|^{\frac 1n}\\
&\geq& \lambda|E\cap(x_1+B)|^{\frac
  1n}+(1-\lambda)|E\cap(x_2+B)|^{\frac 1n}\\
&=&|B|^{\frac 1n}\left(1-\frac{\asym(E)}{2}\right)^{\frac 1n}.
\end{eqnarray*}
The previous inequality shows that $|E\cap(x_\lambda+B)|\geq
|B|(1-\frac{\asym(E)}{2})$. By the definition of Fraenkel
asymmetry, the opposite inequality holds true as well. Thus
$|E\cap(x_\lambda+B)|= |B|(1-\frac{\asym(E)}{2})$. Such an
equality turns out to imply the equality in the Brunn-Minkowski
inequality, which is equivalent to saying that, up to translation,
the sets $E\cap (x_\lambda+B)$ are homotetic to $E\cap (x_1+B)$ for all
$\lambda\in [0,1]$. Since they all
have the same measure, they actually coincide up to translation. As a
result, we obtain the flatness of $\partial E\cap (x_\lambda+B)$ in
the direction of the vector $x_2-x_1$, but this is in 
contradiction with the strict convexity of $E$. 
\end{proof}
\medskip

\begin{proof}[Proof of Theorem \ref{teo:THC1}]
By the Iterative
Selection Principle, and assuming by induction that the coefficients
$c_i = Q^{(i)}(B)$ are finite for all $i=1,\dots,m-1$, we obtain that
$\Qm(B) = \lim\limits_j E^{(m)}_j$, with $E^{(m)}_j$ satisfying the
thesis of Theorem \ref{teo:ISP}. 

Then, again by Theorem \ref{teo:ISP} we may suppose that, for $j$
sufficiently 
large, $H_{\partial E^{(m)}_j}(x)>{\frac 12}$ for $\mathcal
H^1$-a.e. $x\in\partial E^{(m)}_j$, 
hence $E^{(m)}_j$ is a strictly convex set in $\R^2$. Therefore, by
Lemma \ref{lemma:unico} we have $\optcen(E^{(m)}_j) = \{x_0\}$ and, up
to a translation, we may assume that 
$x_0=0$. Therefore, $B$ and $\R^2\setminus B$ are \textit{free
  regions} for $E^{(m)}_j$ in the sense of Remark
\ref{oss:firstvar}. By Theorem 
\ref{teo:ISP} and Remark \ref{oss:firstvar} we finally deduce that
$E^{(m)}_j$ belongs to $\cP(k)$ for some $k\geq 2$.  

Now, by exploiting the annular symmetrization,
we will show that necessarily $E^{(m)}_j$ is an oval, that is,
$E^{(m)}_j\in \cP(2)$. Indeed, assume
by contradiction that $E^{(m)}_j \in \cP(k)$ for some $k>2$. In order
to simplify notation, we drop some indices and let $E=E^{(m)}_j$ be
such that $\partial E$ is described by $\gamma_k$ given in
\eqref{THC1:curve}. Then, we will prove that the \textit{annular
  symmetrized set} $E^{as}$ with respect to the origin of the
reference system satisfies 
\begin{eqnarray*}
|E^{as}|=|E|,\quad
\asym(E^{as})=\asym(E),\quad
P(E^{as})< P(E),
\end{eqnarray*}
which would give the desired contradiction with the minimality of
$E$. To this aim, let $\rho_1, \rho_2>0$ be such that
$B(0,\rho_1)\subset E\subset
B(0,\rho_2)$ and that both $\partial B(0,\rho_1)$ and $\partial
B(0,\rho_2)$ are tangent to $\partial E$. In other words, the annulus
$C_{\rho_1,\rho_2} = B(0,\rho_2) \setminus B(0,\rho_1)$ is the one of
minimal thickness among those containing $\de E$. Let us set $S$ as the
circular sector which is given in polar coordinates $(r,\theta)$ by
$S=[0,+\infty)\times[0,\frac{\pi}{k}]$. Given now $\rho\in(\rho_1,\rho_2)$ we
set $A_k(\rho)$ to be the unique point of intersection of $\partial E$
with $\partial B(0,\rho)$ within the sector $S$, namely
$\{A_k(\rho)\}=\partial E\cap\partial B(0,\rho)\cap S$. We now
introduce the angle $\theta_k(\rho)\in[0,\frac{2\pi}{k}]$ such that, in
polar coordinates, we can represent
$A_k(\rho)=(\rho\cos\theta_k(\rho),\rho\sin\theta_k(\rho))$ and  
\begin{eqnarray*}
\partial E \cap S=\bigcup_{\rho\in[\rho_1,\rho_2]}A_k(\rho)
\end{eqnarray*}
We now have  
\begin{eqnarray}\label{THC1:gamma_k}
\Hau^1(E\cap\partial B(0,\rho))=2k\rho\gamma_k(\rho).
\end{eqnarray}
By definition of the set $E^{as}$ we have that, for all
$\rho\in[\rho_1,\rho_2]$ 
\begin{eqnarray*}
\Hau^1(E\cap\partial B(0,\rho))=\Hau^1(E^{as}\cap\partial B(0,\rho))
\end{eqnarray*}
and moreover that
\begin{eqnarray}\label{TCH1:volumes}
|E^{as}\cap B|=|E\cap B|, \quad|E^{as}\cap (\R^2\setminus B)|=|E\cap
(\R^2\setminus B)|. 
\end{eqnarray}
By exploiting the same polar parameterization as before, we may
write 
\begin{eqnarray*}
\partial E^{as}=\bigcup_{\rho\in[\rho_1,\rho_2]}A_2(\rho)
\end{eqnarray*}
where $A_2(\rho)=(\rho\cos\theta_2(\rho),\rho\sin\theta_2(\rho))$ for
$\rho\in[\rho_1,\rho_2]$ and $\theta_2(\rho)$ is such that  
\begin{eqnarray}\label{THC1:gamma_2}
\Hau^1(E^{as}\cap\partial B(0,\rho))=4\rho\gamma_2(\rho).
\end{eqnarray}
Comparing \eqref{THC1:gamma_k} and \eqref{THC1:gamma_2} we obtain 
\begin{eqnarray}\label{TCH1:angoli}
\theta_2(\rho)={\frac k2}\theta_k(\rho)
\end{eqnarray}
We finally have
\begin{eqnarray}\label{eq:PEas-PE}
\nonumber
P(E^{as})&=&4\int_{\rho_1}^{\rho_2}\sqrt{1+\rho^2(\theta'_2)^2}\
d\rho=4\int_{\rho_1}^{\rho_2}\sqrt{1+\frac{k^2}{4}\rho^2(\theta'_k)^2}\
d\rho\\
&<& 2k\int_{\rho_1}^{\rho_2}\sqrt{1+\rho^2(\theta'_k)^2}\
d\rho=P(E),
\end{eqnarray}
where the last inequality follows since $k\geq 3$. 
To conclude the proof we show that the annular symmetrization
preserves the strict convexity of our sets, i.e., that for
$\Hau$-a.e.$x\in\partial E^{as}$ it holds  
\begin{equation}\label{TCH1:convexity}
H_{\partial E^{as}}(x)>0.
\end{equation}
In fact, were this the case, and arguing as in Step $1$, we would
immediately get ${\mathcal Z}(E^{as})=\{x_0\}$. Then, by the symmetry
of $E^{as}$, $x_0=0$ and finally, thanks to \eqref{TCH1:volumes}, we may
conclude that $\asym(E^{as})=\asym(E)$. 

To prove \eqref{TCH1:convexity} we observe that in $B$ and $\R^2
\setminus \overline B$ we have that $H_{\partial E}$ can be computed through the
parameterization $\theta_k=\theta_k(\rho)$ (with a slight abuse of
notation, we understand $H_{\partial E}$ as a function of $\rho$)
described before. More precisely, the well-known formula
\begin{eqnarray*}
0<H_{\partial
  E}(\rho)=-\frac{\rho^2(\theta_k')^3+\rho\theta_k''+2\theta_k'}{(1+(\rho\theta_k')^2)^{\frac
    32}}
\end{eqnarray*}
holds, and this turns out to imply that
$\rho^2(\theta_k')^3+\rho\theta_k''+2\theta_k'<0$. This last inequality,
together with \eqref{TCH1:angoli} and the fact that, by construction,
$\theta'_k(\rho)<0$, gives 
\begin{eqnarray*}
H_{\partial E^{as}}(\rho)=-\frac{k}{2}\cdot\frac{\frac
  {k^2}{4}\rho^2(\theta_2')^3+\rho\theta_2''+2\theta_2'}{(1+(\frac{k}{2}\rho\theta_2')^2)^{\frac
    32}}\geq -\frac{k}{2}\cdot \frac
{\rho^2(\theta_k')^3+\rho\theta_k''+2\theta_k'}{(1+(\frac{k}{2}\rho\theta_2')^2)^{\frac
    32}}>0. 
\end{eqnarray*} 
Thanks to \eqref{eq:PEas-PE} and by the definition of $\Qmj$ in
\eqref{def:Qmj}, we obtain that 
\begin{eqnarray}\label{symmetrization_step}
\Qmj(E)> \Qmj(E^{as}),
\end{eqnarray}
which is a contradiction with the minimality of $E = E^{(m)}_j$. This
concludes the proof of the theorem.
\end{proof}

\subsection*{Acknowledgements}
%\begin{acknowledgements}
The research of the first author was partially supported by the European Research Council under FP7,
Advanced Grant n. 226234 ``Analytic Techniques for Geometric and Functional Inequalities" and by
the Deutsche Forschungsgemeinschaft through the Sonderforschungsbereich 611.
The second author thanks the Dipartimento di
Matematica e Applicazioni ``R. Caccioppoli'' of the Universit{\`a} degli Studi di Napoli \mbox{``Federico II''}, where he was visiting
scholar during the year 2009.

%\end{acknowledgements}

\bigskip

\bibliographystyle{siam}

\bibliography{bibisoperbc}

\end{document}